\documentclass[onefignum,onetabnum]{siamart171218}

\usepackage{amsmath,amssymb,bbm}
\usepackage{color}
\usepackage{mathtools}
\usepackage[T1]{fontenc}
\usepackage{lmodern}
\usepackage[utf8]{inputenc}
\usepackage{microtype}


\usepackage{amsfonts}
\usepackage{graphicx}
\usepackage{multirow}
\usepackage{multicol}
\usepackage{algorithm, algpseudocode}
\usepackage{siunitx}
\usepackage{enumitem}


\Crefname{ALC@unique}{Line}{Lines}

\ifpdf
  \DeclareGraphicsExtensions{.pdf,.eps,.png,.jpg}
\else
  \DeclareGraphicsExtensions{.eps}
\fi


\newsiamremark{remark}{Remark}
\newsiamremark{hypothesis}{Hypothesis}
\crefname{hypothesis}{Hypothesis}{Hypotheses}
\newsiamthm{claim}{Claim}

\newtheorem{assumption}{Assumption}

\headers{Acceleration of Primal-Dual Methods}{Y.\ Liu, Y.\ Xu, and W.\ Yin}

\title{Acceleration of Primal-Dual Methods by Preconditioning and Simple Subproblem Procedures\thanks{Submitted to the editors XXX 2018.
\funding{The work of Y. Liu and W. Yin is supported in part by NSF DMS-1720237 and ONR N000141712162.}}}

\author{Yanli Liu\thanks{Department of Mathematics, University of California, Los Angeles, CA
  (\email{yanli@math.ucla.edu}, \email{wotaoyin@math.ucla.edu}).}
\and Yunbei Xu\thanks{Graduate School of Business, Columbia University, New York, NY
  (\email{yunbei.xu@gsb.columbia.edu}).}
\and Wotao Yin\footnotemark[2]}

\usepackage{amsopn}

\ifpdf
\hypersetup{
  pdftitle={Acceleration of Primal-Dual Methods},
  pdfauthor={Yanli Liu, Yunbei Xu, and Wotao Yin}
}
\fi



\DeclareMathOperator\prox{Prox}
\DeclareMathOperator*{\argmin}{arg\,min}

\DeclareMathOperator*{\R}{\mathbb{R}}
\DeclareMathOperator*{\RR}{\mathbb{R}\cup\{+\infty\}}
\DeclareMathOperator*{\Rn}{\mathbb{R}^n}
\DeclareMathOperator*{\Rm}{\mathbb{R}^m}

\newcommand{\dom}{\mathbf{dom}}

\newcommand{\vz}{\mathbf{0}}
\makeatletter

\renewcommand{\paragraph}{%
  \@startsection{paragraph}{4} {\z@}
  {1ex \@plus 1ex \@minus .2ex}
  {-1ex}%
  {\bfseries}%
}

\makeatother

\begin{document}

\maketitle

\begin{abstract}
Primal-Dual Hybrid Gradient (PDHG) and Alternating Direction Method of Multipliers (ADMM) are two widely-used first-order optimization methods. They reduce a difficult problem to simple subproblems, so they are easy to implement and have many applications. As first-order methods, however, they are sensitive to problem conditions and can struggle to reach the desired accuracy. To improve their performance, researchers have proposed techniques such as diagonal preconditioning and inexact subproblems. This paper realizes additional speedup about one order of magnitude.

Specifically, we choose non-diagonal preconditioners that are much more effective than diagonal ones. Because of this, we lose closed-form solutions to some subproblems, but we found simple procedures to replace them such as a few proximal-gradient iterations or a few epochs of proximal block-coordinate descent, which are in closed forms. We show global convergence while fixing the number of those steps in every outer iteration. Therefore, our method is reliable and straightforward.

Our method opens the choices of preconditioners and maintains both low per-iteration cost and global convergence. Consequently, on several typical applications of primal-dual first-order methods, we obtain 4--95$\times$ speedup over the existing state-of-the-art.
\end{abstract}
\begin{keywords}
  Primal-Dual Hybrid Gradient, Alternating Direction Method of Multipliers, preconditioning, fixed number of inner iterations, structured subproblem
\end{keywords}

\begin{AMS}
  49M29, 65K10, 65Y20, 90C25
\end{AMS}

\section{Introduction}

In this paper, we consider the following optimization problem:
\begin{align}
    \mathop{\mathrm{minimize}}_{x\in \mathbb{R}^n} f(x)+g(Ax),
    \label{equ: PDHG problem}
\end{align}
together with its dual problem:
\begin{align}
    \mathop{\mathrm{minimize}}_{z\in \mathbb{R}^m} f^*(-A^Tz)+g^*(z),
    \label{equ: PDHG dual problem}
\end{align}
where $f:\Rn\rightarrow \R\cup\{+\infty\}$ and $g: \Rm\rightarrow \R\cup\{+\infty\}$ are closed proper convex, and $A\in \mathbb{R}^{m\times n}$ is a matrix, $f^*$ and $g^*$ are the convex conjugates of $f$ and $g$, respectively.

Formulations \eqref{equ: PDHG problem} or \eqref{equ: PDHG dual problem} are abstractions of many application problems, which include image restoration \cite{zhu2008efficient}, magnetic resonance imaging \cite{valkonen2014primal}, network optimization \cite{feijer2010stability}, computer vision \cite{pock2009algorithm}, and earth mover's distance \cite{li2017parallel}.

To solve \eqref{equ: PDHG problem}, one can apply primal-Dual algorithms such as Primal-Dual Hybrid Gradient (PDHG) and Alternating Direction Method of Multipliers (ADMM).
However, as first-order algorithms, PDHG and ADMM suffer from slow (tail) convergence especially on poorly conditioned problems, when they may take thousands of iterations and still struggle reaching just four digits of accuracy. While they have many other advantages such as being easy to implement and friendly to parallelization, having their performance very sensitive to problem conditions is their main disadvantage. To improve the performance of PDHG and ADMM, researchers have tried using preconditioners, but for reasons we discuss below, only diagonal preconditioners so far.
Depending on the application and how one applies splitting, PDHG and ADMM may or may not have subproblems with closed-form solutions. When they do not, researchers have studied approximate subproblem solutions to reduce the total running time.
In the next subsection, we review the relevant works of preconditioning and inexact subproblems. 

\subsection{Background}
Many problems to which we apply PDHG have separable functions $f$ or $g$, or both, so the resulting PDHG subproblems often (though not always) have closed-form solutions. When subproblems are simple, we care mainly about the convergence rate of PDHG, which depends on the problem conditioning. 
To accelerate PDHG, diagonal preconditioning \cite{pock2011diagonal}
was proposed since its diagonal structure maintains closed-form solutions for the subproblems and, therefore, reduces iteration complexity without making each iteration more difficult. In comparison, non-diagonal preconditioners are much more effective at reducing iteration complexity, but their off-diagonal entries couple different components in the subproblems, causing the lost of closed-form solutions of subproblems. So, it may appear we cannot have both fewer iterations and simple subproblems at the same time. 

When a PDHG subproblem has no closed-form solution, one often uses an iterative algorithm to approximately solve it. We call it Inexact PDHG. Under certain conditions, Inexact PDHG still converges to the exact solution. Specifically, \cite{rasch2018inexact} uses three different types of conditions to skillfully control the errors of the subproblems; all those errors need to be summable over all the iterations and thereby requiring the error to diminish asymptotically. 
In an interesting method from \cite{bredies2015preconditioned, bredies2017proximal}, one subproblem computes a proximal operator of a convex quadratic function, which can include a preconditioner and still has a closed-form solution involving matrix inversion. This proximal operator is successively applied $n$ times in each iteration, for $n\geq 1$. 

ADMM has different subproblems. One of it subproblems minimizes the sum of $f(x)$ and a squared term involving $Ax$. Only when $A$ has special structures does the subproblem have closed-form solutions. Inexact ADMM refers to the ADMM with at least one of its subproblems inexactly solved. An \textit{absolute error criterion} was introduced in \cite{eckstein1992douglas}, where the subproblem errors are controlled by a summable (thus diminishing) sequence of error tolerances.
To simplify the choice of the sequences, a \textit{relative error criterion} was adopted in several later works, where the subproblem errors are controlled by a single parameter multiplying certain quantities that one can compute during the iterations. In \cite{ng2011inexact}, the parameters need to be square summable.
In \cite{li2013inexact}, the parameters are constants when both objective functions are Lipschitz differentiable. In \cite{eckstein2017relative, eckstein2017approximate}, two possible outcomes of the algorithm are described: (i) infinite outer loops and finite inner loops, and (ii) finite outer loops and the last inner loop is infinite, both guaranteeing convergence to a solution. On the other hand, it is unclear how to compare them. Since there is no bound on the number of inner loops in case (i), one may recognize it as case (ii) and stop the algorithm before it converges.

There are works that apply certain kinds of preconditioning to accelerate ADMM. Paper \cite{giselsson2014diagonal} uses diagonal preconditioning and observes improved performance. After that, non-diagonal preconditioning is analyzed \cite{bredies2015preconditioned, bredies2017proximal}, which presents effective preconditioners for specific applications. One of their preconditioners needs to be inverted (though not needed in our method). Recently, preconditioning for strongly convex problems has also been studied \cite{giselsson2017linear} and promising numerical performances.

\subsection{Contributions}


Simply speaking, we find a way to have both non-diagonal preconditioners (thus much fewer iterations) and very simple subproblem procedures.

Our exposition takes a few steps. First, we present Preconditioned PDHG (PrePDHG) and discuss how to choose preconditioners by minimizing an upper bound in PrePDHG's ergodic convergence analysis. 
We can observe ADMM as a special case of PrePDHG where one of the preconditioners is identity (no preconditioning) and the other is the optimal choice, which minimizes the bound, and, thereby, explaining why ADMM often takes fewer iterations than PDHG.

Then, we show that PrePDHG still converges when one of its subproblems is solved inexactly to a specified condition. Remarkably, we do not need to verify this condition to stop a procedure since it is automatically satisfied as long as one applies a common iterative method for a \emph{fixed number} of iterations. Common choices of subproblem procedures include proximal gradient descent, FISTA with restart, proximal block coordinate descent,
and accelerated block-coordinate-gradient-descent (BCGD) methods (e.g.,  \cite{lin2015accelerated,allen2016even, hannah2018texttt}). We call this method iPrePDHG (i for ``inexact''). 

We leave the other subproblem exactly solved in iPrePDHG since we have not encountered interesting applications that require non-diagonal preconditioners for both subproblems yet. If one is encountered, we can always split it in a way such that all ill-conditioned terms are collected in one subproblem.

Next, we apply iPrePDHG and develop effective preconditioners for a set of classic and representative applications of primal-dual splitting methods: image denoising, graph cut, optimal transport, and CT reconstruction. 
The CT reconstruction application uses a diagonal preconditioner in one subproblem, which has a closed-form solution, and a non-diagonal preconditioner in the other, which has no closed-form solution. In each of the other applications, one subproblem uses no preconditioner, and the other uses a non-diagonal preconditioner.

Finally, we numerically evaluated the performance of iPrePDHG 
using our recommended preconditioners. 
We obtained speedups of 4--95 times over the existing state-of-the-art.
We believe it is a sufficient demonstration on how to apply preconditioners effectively and efficiently in PDHG.

Since we show ADMM is a special PrePDHG, our method also applies to ADMM. In fact, the iPrePDHG algorithms for three of the four applications are also Inexact Preconditioned ADMM under simple transformations. 


\subsection{Organization}
The rest of this paper is organized as follows: Section \ref{Section: preliminaries} establishes notation and reviews basics. In the first part of Section \ref{Section: Acceleration of PrePDHG}, we provide a criterion for choosing preconditioners. In its second part, we introduce the condition for inexact subproblems, which can be automatically satisfied by iterating a fixed number of certain inner loops. This method is iPrePDHG. In the last part of Section \ref{Section: Acceleration of PrePDHG}, we establish the convergence of iPrePDHG. Section \ref{Section: Numerical experiments} describes specific preconditioners and reports numerical results. Finally, Section \ref{Sec: conclusion} concludes the paper.

\section{Preliminaries}
\label{Section: preliminaries}
In this section, we introduce our notation and state the basic results that we need later. 

We use $\|\cdot\|$ for  $\ell_2-$norm and $\langle\cdot, \cdot\rangle$ for  dot product. $M\succ 0$ means $M$ is a  symmetric, positive definite matrix, 
and $M\succeq 0$ means $M$ is a symmetric, positive semidefinite matrix.

We write $\lambda_{\text{min}}(M)$ and $\lambda_{\text{max}}(M)$ as the smallest and the largest eigenvalues of $M$, respectively, and  $\kappa(M)=\frac{\lambda_{\text{max}}(M)}{\lambda_{\text{min}}(M)}$ as the condition number of $M$. For $M\succeq 0$, let $\|\cdot\|_M$ and $\langle \cdot, \cdot \rangle_M$ denote the semi-norm and inner product induced by $M$, respectively. If $M\succ 0$, $\|\cdot\|_M$ is a norm.

For a proper closed convex function $\phi:\Rn\rightarrow\RR$, its subdifferential at $x\in\dom f$ is written as
$$
\partial \phi(x)=\{v\in \Rn\,|\, \phi(z)\geq \phi(x)+\langle v, z-x\rangle\,\,\forall z\in\Rn\},$$
and its convex conjugate as
$$\phi^*(y)=\sup_{x\in \R^n}\{ \langle y, x\rangle-\phi(x) \}. $$
We have $y\in\partial \phi(x)$ if and only if $x\in\partial \phi^*(y)$.

For any  $M\succ 0$, we define the extended proximal operator of $\phi$ as
\begin{align}
\prox^M_{\phi}(x)\coloneqq\argmin_{y\in\Rn}\{\phi(y)+\frac{1}{2}\|y-x\|^2_{M}\}.
\label{equ: extended proximal operator}
\end{align}
If $M=\gamma^{-1} I$ for $\gamma>0$, it reduces to a classic proximal operator.

We also have the following generalization of Moreau's Identity:
\begin{lemma}[\cite{combettes2013moreau}, Theorem 3.1(ii)]
For any proper closed convex function $\phi$ and $M\succ 0$, we have
\begin{equation}
    x=\prox^M_{\phi}(x)+M^{-1}\prox^{M^{-1}}_{\phi^*}(Mx).
    \label{equ: generalized Moreau}
\end{equation}
\end{lemma}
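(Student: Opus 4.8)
The plan is to reduce the identity to the first-order optimality conditions of the two proximal minimizations and to link them through the conjugate subgradient relation $y\in\partial\phi(x)\iff x\in\partial\phi^*(y)$ recorded above. Since $M\succ 0$, both weighted quadratics $\frac{1}{2}\|\cdot-x\|_M^2$ and $\frac{1}{2}\|\cdot-Mx\|^2_{M^{-1}}$ are strongly convex, so each extended proximal operator is single-valued; it therefore suffices to identify the unique minimizers and check they match the two sides of \eqref{equ: generalized Moreau}.

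First I would set $p=\prox^M_{\phi}(x)$ and write down its optimality condition. Minimizing $\phi(y)+\frac{1}{2}\|y-x\|_M^2$ yields $0\in\partial\phi(p)+M(p-x)$, that is,
$$M(x-p)\in\partial\phi(p).$$
Define $q\coloneqq M(x-p)$, so that by construction $x=p+M^{-1}q$, which is exactly the decomposition we want, \emph{provided} we can show $q=\prox^{M^{-1}}_{\phi^*}(Mx)$. The membership $q\in\partial\phi(p)$ is equivalent, via the conjugate subgradient duality, to
$$p\in\partial\phi^*(q).$$

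Next I would verify that this same $q$ satisfies the optimality condition of the second proximal problem. Minimizing $\phi^*(y)+\frac{1}{2}\|y-Mx\|^2_{M^{-1}}$ gives the condition $M^{-1}(Mx-y)\in\partial\phi^*(y)$. Evaluating the left-hand side at $y=q$ produces $M^{-1}(Mx-q)=x-M^{-1}q=x-(x-p)=p$, and we already established $p\in\partial\phi^*(q)$. Hence $q$ meets the optimality condition, so by uniqueness $q=\prox^{M^{-1}}_{\phi^*}(Mx)$; substituting back gives $x=p+M^{-1}q$, which is the claim.

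The calculations are essentially routine; the one place to be careful is the precise form of the optimality conditions under the weighted norms, since the gradient of $\frac{1}{2}\|y-x\|_M^2$ in $y$ is $M(y-x)$ while that of $\frac{1}{2}\|y-Mx\|^2_{M^{-1}}$ is $M^{-1}(y-Mx)$. It is exactly this appearance of $M$ in one condition versus $M^{-1}$ in the other that makes the two mesh after applying the conjugate relation, and this is the step I would check most carefully. One should also confirm the subdifferential sum rule applies, which is immediate because the quadratic terms are smooth and finite-valued, so the stated optimality conditions hold without qualification.
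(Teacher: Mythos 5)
Your argument is correct: the optimality conditions $M(x-p)\in\partial\phi(p)$ and $M^{-1}(Mx-q)\in\partial\phi^*(q)$ are stated accurately, and the conjugate subgradient duality links them exactly as you describe, with uniqueness following from the strong convexity of the weighted quadratics. The paper itself gives no proof---it simply cites Theorem 3.1(ii) of the reference---and your derivation is the standard one underlying that result, so there is nothing further to reconcile.
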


We say a proper closed function is  a Kurdyka-Łojasiewicz (KŁ) function if, for each $x_0\in \textbf{dom} f$, there exist $\eta\in(0,\infty]$, a neighborhood $U$ of $x_0,$ and a continuous concave function $\varphi: [0, \eta)\rightarrow \R_+$ such that:
\begin{enumerate}
    \item $\varphi(0) = 0$,
    \item $\varphi$ is $C^1$ on $(0,\eta)$,
    \item for all $s\in(0,\eta), \varphi'(s)>0$,
    \item for all $x\in U\cap \{x\,|\,f(x_0)<f(x)<f(x_0)+\eta\}$, the KŁ inequality holds:
    \[
    \varphi'(f(x)-f(x_0))\text{dist}(0,\partial f(x))\geq 1.
    \]
\end{enumerate}

\section{Main results}
\label{Section: Acceleration of PrePDHG}
This section presents our results on preconditioner selection, the proposed method iPrePDHG, and its convergence.

Throughout this section, we assume the following regularity assumptions:
\begin{assumption}
\label{Assumption 1}
\hfill
\begin{enumerate}
    \item $f:\Rn\rightarrow \RR$, $g:\Rm\rightarrow \RR$ are proper closed convex.
    \item A primal-dual solution pair $(x^{\star}, z^{\star})$ of \eqref{equ: PDHG problem} and \eqref{equ: PDHG dual problem} exists, i.e.,
    \[
       \mathbf{0}\in\partial f(x^{\star})+A^Tz^{\star},\quad \mathbf{0}\in \partial g(Ax^{\star})-z^{\star}.
    \]
\end{enumerate}
\end{assumption}
The problem \eqref{equ: PDHG problem} also has the following convex-concave saddle-point formulation:
\begin{align}
\min_{x\in \mathbb{R}^n}\max_{z\in \mathbb{R}^m} \varphi(x,z)\coloneqq f(x)+\langle Ax, z\rangle-g^*(z).
\label{equ: convex-concave}
\end{align}
A primal-dual solution pair $(x^{\star}, z^{\star})$ is a solution of \eqref{equ: convex-concave}.

\subsection{Preconditioned PDHG}
\label{subsection: PrePDHG}
The method of Primal-Dual Hybrid Gradient or PDHG \cite{zhu2008efficient,chambolle2011first} for solving \eqref{equ: PDHG problem} refers to the iteration
\begin{align}
\begin{split}
    {x}^{k+1}&=\prox_{\tau f}(x^{k}-\tau A^Tz^{k}),\\
    {z}^{k+1}&=\prox_{\sigma g^*}(z^{k}+\sigma A(2{x}^{k+1}-x^{k})).
    \label{equ: PDHG}
\end{split}
\end{align}

When $\frac{1}{\tau\sigma}\geq \|A\|^2$, the iterates of \eqref{equ: PDHG} converge \cite{chambolle2011first} to a primal-dual solution pair of \eqref{equ: PDHG problem}.
We can generalize \eqref{equ: PDHG} by applying preconditioners $M_1, M_2\succ 0$ (their choices are discussed below) to obtain Preconditioned PDHG or PrePDHG:
\begin{align}
\label{equ: PrePDHG}
\begin{split}
    {x}^{k+1}&=\prox^{M_1}_{ f}(x^{k}-M_1^{-1} A^Tz^{k}),\\
    {z}^{k+1}&=\prox^{M_2}_{ g^*}(z^{k}+M_2^{-1} A(2{x}^{k+1}-x^{k})),
\end{split}
\end{align}
where the extended proximal operators $\prox^{M_1}_{f}$ and $\prox^{M_2}_{g^*}$ are defined in \eqref{equ: extended proximal operator}. We can obtain the convergence of PrePDHG using the analysis in \cite{chambolle2016ergodic}. 

There is no need to compute $M_1^{-1}$ and $M_2^{-1}$ since \eqref{equ: PrePDHG} is equivalent to
\begin{align}
\label{equ: PrePDHG 1}
\begin{split}
    {x}^{k+1}&=\argmin_{x\in\Rn}\{f(x)+\langle x-x^k, A^Tz^k\rangle + \frac{1}{2}\|x-x^k\|^2_{M_1}\},\\
    {z}^{k+1}&=\argmin_{z\in\Rm}\{g^*(z)-\langle z-z^k, A(2x^{k+1}-x^k)\rangle+\frac{1}{2}\|z-z^k\|^2_{M_2}\}.
\end{split}
\end{align}

\subsection{Choice of preconditioners}
\label{Subsection: Convergence of PrePDHG and Choice of Preconditioners}
In this section, we discuss how to select appropriate preconditioners $M_1$ and $M_2$. 
As a by-product, we show that ADMM corresponds to choosing $M_1=\frac{1}{\tau} I_{n\times n}$ and optimally choosing $M_2=\tau AA^T$, thereby, explaining why ADMM appears to be faster than PDHG. 

Let us start with the following lemma, which characterizes primal-dual solution pairs of \eqref{equ: PDHG problem} and \eqref{equ: PDHG dual problem}.
\begin{lemma}
\label{lem: choose preconditioners}
Under Assumption \ref{Assumption 1}, $(X, Z)$ is a primal-dual solution pair of \eqref{equ: PDHG problem} if and only if $\varphi(X, z)-\varphi(x,Z)\leq 0$ for any $(x,z)\in\mathbb{R}^{n+m}$, where $\varphi$ is given in the saddle-point formulation \eqref{equ: convex-concave}.
\end{lemma}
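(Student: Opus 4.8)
The plan is to recognize that the stated inequality $\varphi(X,z)-\varphi(x,Z)\le 0$ for all $(x,z)$ is exactly the saddle-point condition for $\varphi$, and then to show that being a saddle point of $\varphi$ is equivalent to satisfying the optimality conditions in Assumption \ref{Assumption 1}(2). First I would observe that specializing the inequality to $z=Z$ and to $x=X$ splits it into the two one-sided statements $\varphi(X,Z)\le\varphi(x,Z)$ for all $x$ and $\varphi(X,z)\le\varphi(X,Z)$ for all $z$; conversely these two chains recombine to give the full inequality $\varphi(X,z)\le\varphi(X,Z)\le\varphi(x,Z)$. Thus it suffices to prove that $X$ minimizes $x\mapsto\varphi(x,Z)$ and $Z$ maximizes $z\mapsto\varphi(X,z)$ if and only if the inclusions $\mathbf{0}\in\partial f(X)+A^TZ$ and $\mathbf{0}\in\partial g(AX)-Z$ hold.

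For the primal block, I would use that $x\mapsto\varphi(x,Z)=f(x)+\langle Ax,Z\rangle-g^*(Z)$ is proper closed convex, so by Fermat's rule $X$ is a minimizer if and only if $\mathbf{0}\in\partial_x\varphi(X,Z)$. Since the term $\langle Ax,Z\rangle-g^*(Z)$ is affine in $x$ with gradient $A^TZ$, the subdifferential sum rule gives $\partial_x\varphi(X,Z)=\partial f(X)+A^TZ$, so this minimality is exactly the first inclusion. For the dual block, $z\mapsto\varphi(X,z)=f(X)+\langle AX,z\rangle-g^*(z)$ is concave, and $Z$ maximizes it if and only if $\mathbf{0}\in AX-\partial g^*(Z)$, i.e. $AX\in\partial g^*(Z)$. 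Using the conjugacy equivalence $y\in\partial\phi(x)\Leftrightarrow x\in\partial\phi^*(y)$ recorded in the preliminaries, $AX\in\partial g^*(Z)$ is equivalent to $Z\in\partial g(AX)$, which is the second inclusion $\mathbf{0}\in\partial g(AX)-Z$.

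Chaining these equivalences in both directions completes the argument: if $(X,Z)$ is a primal-dual solution pair, the two inclusions give minimality in $x$ and maximality in $z$, hence $\varphi(X,z)\le\varphi(X,Z)\le\varphi(x,Z)$ and the claimed inequality follows; if the inequality holds, the split yields the same minimality and maximality, which reverse-translate into the two inclusions. I expect the only delicate point to be handling the dual variable correctly: one must maximize (not minimize) the concave map $\varphi(X,\cdot)$ and apply the conjugacy identity to convert the stationarity condition on $g^*$ back into a condition on $g$, whereas the primal side is a direct application of Fermat's rule. No properness or constraint-qualification subtleties arise, since the nonsmooth pieces enter only through single subdifferentials and all coupling terms are affine.
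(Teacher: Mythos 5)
Your proof is correct and follows essentially the same route as the paper: both directions reduce the inequality to the two subdifferential inclusions, with the backward direction obtained by specializing to $x=X$ and $z=Z$ and the forward direction by the subgradient inequalities for $f$ and $g^*$ (which your Fermat's-rule/saddle-point phrasing repackages equivalently). The only cosmetic difference is that the paper adds the two subgradient inequalities directly rather than passing through the intermediate value $\varphi(X,Z)$.
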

\begin{proof}
If $(X,Z)$ is a primal-dual solution pair of \eqref{equ: PDHG problem}, then
\[
-A^TZ\in\partial f(X),\quad AX\in \partial g^*(Z).
\]
Hence, for any $(x,z)\in\mathbb{R}^{n+m}$ we have
\[
f(x)\geq f(X)+\langle -A^TZ, x-X \rangle,\quad g^*(z)\geq g^*(Z)+\langle AX, z-Z\rangle.
\]
Adding them together yields $\varphi(X, z)-\varphi(x,Z)\leq 0$.

On the other hand, if $\varphi(X, z)-\varphi(x,Z)\leq 0$ for any $(x,z)\in\mathbb{R}^{n+m}$, then
\[
\langle AX, z\rangle +f(X)-g^*(z)-\langle Ax, Z\rangle-f(x)+g^*(Z)\leq 0 \quad \text{for any}\,\,(x,z)\in\mathbb{R}^{n+m}.
\]
Taking $x=X$ yields $\langle AX, z-Z\rangle-g^*(z)+g^*(Z)\leq 0$, so $AX\in\partial g^*(Z)$; Similarly, taking $z=Z$ gives $\langle AX-Ax, Z\rangle+f(X)-f(x)\leq 0$, so $-A^TZ\in\partial f(X)$. As a result, $(X,Z)$ is a primal-dual solution pair of \eqref{equ: PDHG problem}.
\end{proof}

We present the following convergence result,  adapted from Theorem 1
of \cite{chambolle2016ergodic}.
\begin{theorem}
\label{thm: ergodic convergence}
Let $(x^k, z^k), k=0,1,...,N$ be a sequence generated by PrePDHG \eqref{equ: PrePDHG}. Under Assumption \ref{Assumption 1}, if in addition
\begin{equation}
\label{equ: M}
    \tilde{M}\coloneqq\begin{pmatrix}
    M_1 & -A^T\\
    -A & M_2
    \end{pmatrix}\succeq 0,
\end{equation}
then, for any $x\in\Rn$ and $z\in\Rm$, it holds that
\begin{equation}
\label{equ: ergodic convergence}
    \varphi(X^N, z)-\varphi(x, Z^N)\leq \frac{1}{2N}(x-x^0, z-z^0)\begin{pmatrix}
    M_1 & -A^T\\
    -A & M_2
    \end{pmatrix}
    \begin{pmatrix}
    x-x^0\\
    z-z^0
    \end{pmatrix},
\end{equation}
where $X^N=\frac{1}{N}\sum_{i=1}^{N}x^i$ and $ Z^N=\frac{1}{N}\sum_{i=1}^{N}z^i$.
\end{theorem}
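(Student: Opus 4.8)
The plan is to derive a single per-iteration inequality and telescope it, reducing the ergodic statement to a Jensen argument. Write $u=(x,z)$ and $u^k=(x^k,z^k)$, so the claimed right-hand side is $\frac{1}{2N}\|u-u^0\|_{\tilde{M}}^2$. Since $\varphi(\cdot,z)$ is convex and $\varphi(x,\cdot)$ is concave, Jensen's inequality gives
\[
\varphi(X^N,z)-\varphi(x,Z^N)\le \tfrac1N\sum_{k=0}^{N-1}\bigl(\varphi(x^{k+1},z)-\varphi(x,z^{k+1})\bigr),
\]
so it suffices to bound each summand on the right.

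First I would read off the first-order optimality conditions from the two subproblems in \eqref{equ: PrePDHG 1}: the $x$-update gives $M_1(x^k-x^{k+1})-A^Tz^k\in\partial f(x^{k+1})$ and the $z$-update gives $M_2(z^k-z^{k+1})+A(2x^{k+1}-x^k)\in\partial g^*(z^{k+1})$. Inserting these subgradients into the defining inequality of the subdifferential for $f$ (evaluated at $x$) and for $g^*$ (evaluated at $z$), adding the two, and regrouping via $\varphi(x,z)=f(x)+\langle Ax,z\rangle-g^*(z)$, I arrive at
\[
\varphi(x^{k+1},z)-\varphi(x,z^{k+1})\le \langle x^k-x^{k+1},x^{k+1}-x\rangle_{M_1}+\langle z^k-z^{k+1},z^{k+1}-z\rangle_{M_2}+B,
\]
where $B$ collects every bilinear-in-$A$ term, namely those produced by $-A^Tz^k$ and $A(2x^{k+1}-x^k)$ together with the coupling $\langle Ax^{k+1},z\rangle-\langle Ax,z^{k+1}\rangle$ from $\varphi$.

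The crux is an exact algebraic identity. For the two $M_i$-terms I would apply the three-point identity $\langle a-b,b-c\rangle_{M}=\frac12\|a-c\|_M^2-\frac12\|b-c\|_M^2-\frac12\|a-b\|_M^2$, which reproduces the diagonal ($M_1$, $M_2$) part of
\[
T_k\coloneqq\tfrac12\|u^k-u\|_{\tilde{M}}^2-\tfrac12\|u^{k+1}-u\|_{\tilde{M}}^2-\tfrac12\|u^{k+1}-u^k\|_{\tilde{M}}^2.
\]
The step I expect to be the main obstacle is verifying that $B$ equals precisely the off-diagonal ($-A$, $-A^T$) part of $T_k$. I would confirm this by expanding both $B$ and the cross-terms of the three $\tilde{M}$-norms into the monomials $\langle Ax^i,z^j\rangle$ and matching coefficients; the extrapolation factor $2$ in $2x^{k+1}-x^k$ is exactly what supplies the coefficient $2$ on $\langle Ax^{k+1},z^{k+1}\rangle$, so the match is structural rather than accidental. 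Granting this identity, the two $M_i$-terms together with $B$ sum exactly to $T_k$, and the per-iteration bound becomes $\varphi(x^{k+1},z)-\varphi(x,z^{k+1})\le T_k$.

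Finally, the hypothesis $\tilde{M}\succeq 0$ lets me discard the nonpositive term $-\frac12\|u^{k+1}-u^k\|_{\tilde{M}}^2$ in $T_k$. Summing the resulting inequality over $k=0,\dots,N-1$ telescopes the remaining two terms to $\frac12\|u^0-u\|_{\tilde{M}}^2-\frac12\|u^N-u\|_{\tilde{M}}^2\le\frac12\|u^0-u\|_{\tilde{M}}^2$. Dividing by $N$ and combining with the Jensen bound from the first step yields exactly \eqref{equ: ergodic convergence}.
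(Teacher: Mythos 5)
Your proof is correct, and I checked the one step you flagged as the potential obstacle: expanding $B$ and the cross-terms of $\tfrac12\|u^k-u\|_{\tilde{M}}^2-\tfrac12\|u^{k+1}-u\|_{\tilde{M}}^2-\tfrac12\|u^{k+1}-u^k\|_{\tilde{M}}^2$ into monomials $\langle Ax^i,z^j\rangle$, both sides reduce to
$2\langle Ax^{k+1},z^{k+1}\rangle-\langle Ax^{k+1},z^k\rangle-\langle Ax^k,z^{k+1}\rangle-\langle Ax^{k+1},z\rangle+\langle Ax^k,z\rangle+\langle Ax,z^k\rangle-\langle Ax,z^{k+1}\rangle$, so the identity holds and the per-iteration bound $\varphi(x^{k+1},z)-\varphi(x,z^{k+1})\le T_k$ is exact. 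Your route differs from the paper's in presentation rather than substance: the paper's proof is a one-line reduction to Theorem 1 of the cited ergodic-convergence reference (setting $L_f=0$, $\frac{1}{\tau}D_x=\frac12\|\cdot\|_{M_1}^2$, $\frac{1}{\sigma}D_z=\frac12\|\cdot\|_{M_2}^2$, $K=A$), together with the observation that condition \eqref{equ: M} is exactly what replaces the $1$-strong-convexity requirement on the Bregman distances there. Your derivation is essentially the argument underlying that cited theorem, specialized to quadratic distance-generating functions; what it buys is a self-contained verification and a transparent view of where the hypothesis $\tilde{M}\succeq 0$ enters (only to discard $-\tfrac12\|u^{k+1}-u^k\|_{\tilde{M}}^2$ and the final $-\tfrac12\|u^N-u\|_{\tilde{M}}^2$ after telescoping), at the cost of redoing bookkeeping the paper outsources. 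One cosmetic remark: the Jensen step should be read in the extended-real-valued sense (if $f(x)$ or $g^*(z)$ is $+\infty$ the inequality is trivial), but this causes no difficulty.
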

\begin{proof}
This follows from Theorem 1 of \cite{chambolle2016ergodic} by setting $L_f=0$, $\frac{1}{\tau}D_x(x,x_0)=\frac{1}{2}\|x-x^0\|^2_{M_1}$, $\frac{1}{\sigma}D_z(z,z_0)=\frac{1}{2}\|z-z^0\|^2_{M_2}$, and $K=A$. Note that in Remark 1 of \cite{chambolle2016ergodic}, $D_x$ and $D_z$ need to be $1-$strongly convex to ensure that equation (13) holds, which is exactly our \eqref{equ: M}. Therefore, we don't need $D_x$ and $D_z$ to be strongly convex.
\end{proof}

Based on the above results,
one approach to accelerate convergence is to choose preconditioners $M_1$ and $M_2$ to obey \eqref{equ: M} and minimize the right-hand side of \eqref{equ: ergodic convergence}. When a pair of preconditioner matrices attains this minimum, we say they are optimal. When one of them is fixed, the other that attains the minimum is also called optimal.  

By Schur complement, the condition \eqref{equ: M} is equivalent to $M_2\succeq AM_1^{-1}A^T$. Hence, for any given $M_1\succ 0$, the optimal $M_2$ is $AM_1^{-1}A^T$.

Original PDHG \eqref{equ: PDHG} corresponds to $M_1=\frac{1}{\tau}I_{n\times n}$, $M_2=\frac{1}{\sigma}I_{m\times m}$ with $\tau$ and $\sigma$ obeying $\frac{1}{\tau\sigma}\geq \|A\|^2$ for convergence. In  Appendix \ref{App: to ADMM}, we show that ADMM for problem \eqref{equ: PDHG problem} corresponds to setting $M_1=\frac{1}{\tau} I_{n\times n}, ~M_2={\tau} AA^T$, $M_2$ is optimal since $AM_1^{-1}A^T=\tau AA^T=M_2$. (This is related to, but different from, the result in \cite[Sec. 4.3]{chambolle2011first} stating that PDHG is equivalent to a preconditioned ADMM.)


By using more general pairs of $M_1,M_2$, we can potentially have even fewer iterations of PrePDHG than ADMM. 

\subsection{PrePDHG with fixed inner iterations}
\label{subsection: finite inner loops}
It wastes total time to solve the subproblems in \eqref{equ: PrePDHG 1} very accurately. It is more efficient to develop a proper condition and stop the subproblem procedure, which we call \emph{inner iterations}, 
once the condition is satisfied. 
It is even better if we can simply fix the number of inner iterations and still guarantee global convergence. 

In this subsection, we describe the ``bounded relative error'' of the $z$-subproblem in \eqref{equ: PrePDHG} and then show that this can be satisfied by running a fixed number of inner iterations, uniformly for every outer loop.
\begin{definition}[Bounded relative error condition]
\label{def: bounded relative error}
Given $x^k$, $x^{k+1}$ and $z^k$, we say that the $z$-subproblem in PrePDHG~\eqref{equ: PrePDHG} is solved to a bounded relative error if there is a constant $c>0$ such that
\begin{align}
    &\mathbf{0}\in\partial g^*(z^{k+1})+M_2\big(z^{k+1}-z^{k}-M_2^{-1}A(2x^{k+1}-x^k)\big)+\varepsilon^{k+1},\label{equ: varepsilon^{k+1}}\\
    &\|\varepsilon^{k+1}\| \leq c\|z^{k+1}-z^{k}\|.\label{equ: bounded}
\end{align}
\end{definition}

Remarkably, this condition does not need to be checked at run time. For a fixed $c>0$, the condition can be satisfied by a fixed number of inner iterations using, for example,  proximal gradient iteration (Theorem \ref{thm: finite inner loops}). One can also use faster solvers, e.g., FISTA with restart \cite{o2015adaptive}, and solvers that suit the subproblem structure, e.g., cyclic proximal BCD (Theorem \ref{thm: BCD finite inner loops}). Although the error in solving $z$-subproblems appears to be neither summable nor square summable, convergence can still be established. But first, we summarize this method in Algorithm \ref{alg: Inexact PrePDHG}.

\begin{algorithm}
\caption{Inexact Preconditioned PDHG or iPrePDHG}
\label{alg: Inexact PrePDHG}
\textbf{Input:} $f,g,A$ in \eqref{equ: PDHG problem}, preconditioners $M_1$ and $M_2$, initial  $(x_0, z_0)$, $z$-subproblem iterator $S$, inner iteration number $p$, max outer iteration number $K$.\\
\textbf{Output:} $(x^K, z^K)$
\begin{algorithmic}[1]
\For{$k\leftarrow 0, 1,..., K-1$}{}
\State{${x}^{k+1}=\prox^{M_1}_{ f}(x^{k}-M_1^{-1} A^Tz^{k})$;}
\State{$z^{k+1}_0=z^k$;}
\For{$i\leftarrow 0, 1,...,p-1$}
\State{$z^{k+1}_{i+1}=S(z^{k+1}_i, x^{k+1}, x^k)$;}
\EndFor
\State{$z^{k+1}=z^{k+1}_p$;}
\Comment{which approximates $\prox^{M_2}_{g^*}(z^{k}+M_2^{-1} A(2{x}^{k+1}-x^{k}))$}
\EndFor
\end{algorithmic}
\end{algorithm}

\begin{theorem}
\label{thm: finite inner loops}
Take Assumption \ref{Assumption 1}. Suppose in iPrePDHG, or Algorithm \ref{alg: Inexact PrePDHG}, we choose $S$ as the proximal-gradient step with stepsize $\gamma\in (0, \frac{2\lambda_{\mathrm{min}}(M_2)}{\lambda_{\mathrm{max}}^2(M_2)})$ and repeat it $p$ times, where $p \geq 1$. Then, $z^{k+1}=z^{k+1}_p$ is an approximate solution to the $z$-subproblem up to a bounded relative error in \eqref{equ: bounded} for
\begin{align}
\label{equ: c}
    c=c(p)=\frac{\frac{1}{\gamma}+\lambda_{\mathrm{max}}(M_2)}{1-\tau^p}(\tau^p+\tau^{p-1}),
\end{align}
where $\tau=\sqrt{1-\gamma(2\lambda_{\mathrm{min}}(M_2)-\gamma \lambda_{\mathrm{max}}^2(M_2))}<1$.
\end{theorem}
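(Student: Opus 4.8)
The plan is to make the iterator $S$ explicit, write the residual $\varepsilon^{k+1}$ as an exact linear image of a single inner step, and then control that step by a contraction estimate anchored at the exact subproblem solution. Abbreviate $b\coloneqq A(2x^{k+1}-x^k)$. By \eqref{equ: PrePDHG 1}, the $z$-subproblem minimizes $g^*(z)+q(z)$ with $q(z)=\tfrac12\|z-z^k\|_{M_2}^2-\langle z-z^k,b\rangle$, so $\nabla q(z)=M_2(z-z^k)-b$. The proximal-gradient step is therefore $S(z)=T(z)\coloneqq\prox_{\gamma g^*}\big(G(z)\big)$, where $G(z)=z-\gamma\nabla q(z)=(I-\gamma M_2)z+\gamma(M_2 z^k+b)$ is affine with linear part $I-\gamma M_2$; the inner loop is $z_0=z^k$, $z_{i+1}=T(z_i)$, $z^{k+1}=z_p$.

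First I would extract the residual identity. Writing the optimality condition of the last prox step ($i=p-1$), namely $\mathbf{0}\in\partial g^*(z_p)+\tfrac1\gamma(z_p-z_{p-1})+M_2(z_{p-1}-z^k)-b$, and comparing it with the target inclusion \eqref{equ: varepsilon^{k+1}} (in which $M_2(z^{k+1}-z^k-M_2^{-1}b)=M_2(z^{k+1}-z^k)-b$), the $\partial g^*(z^{k+1})$ and $-b$ terms cancel and I am left with the exact expression $\varepsilon^{k+1}=\big(\tfrac1\gamma I-M_2\big)(z_p-z_{p-1})$. Hence $\|\varepsilon^{k+1}\|\le\big(\tfrac1\gamma+\lambda_{\mathrm{max}}(M_2)\big)\|z_p-z_{p-1}\|$, which already produces the prefactor appearing in $c(p)$ in \eqref{equ: c}. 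It remains to bound $\|z_p-z_{p-1}\|$ by $\|z^{k+1}-z^k\|=\|z_p-z_0\|$.

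Next I would prove that $T$ is a $\tau$-contraction. Since $\prox_{\gamma g^*}$ is nonexpansive, $\|T(z)-T(z')\|\le\|(I-\gamma M_2)(z-z')\|$, and expanding $\|(I-\gamma M_2)v\|^2=\|v\|^2-2\gamma\langle v,M_2 v\rangle+\gamma^2\|M_2 v\|^2$ and using $\langle v,M_2v\rangle\ge\lambda_{\mathrm{min}}(M_2)\|v\|^2$ together with $\|M_2 v\|^2\le\lambda_{\mathrm{max}}^2(M_2)\|v\|^2$ gives $\|(I-\gamma M_2)v\|^2\le\big(1-2\gamma\lambda_{\mathrm{min}}(M_2)+\gamma^2\lambda_{\mathrm{max}}^2(M_2)\big)\|v\|^2=\tau^2\|v\|^2$, with $\tau<1$ precisely when $\gamma<2\lambda_{\mathrm{min}}(M_2)/\lambda_{\mathrm{max}}^2(M_2)$ — exactly the stated stepsize range. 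Let $z^\star$ be the unique minimizer of the strongly convex ($M_2\succ0$) $z$-subproblem, which is the fixed point of $T$. Then $\|z_i-z^\star\|\le\tau^i\|z_0-z^\star\|$, so the triangle inequality gives $\|z_p-z_{p-1}\|\le(\tau^p+\tau^{p-1})\|z_0-z^\star\|$, while from below $\|z_p-z_0\|\ge\|z_0-z^\star\|-\|z_p-z^\star\|\ge(1-\tau^p)\|z_0-z^\star\|$. Combining these two and feeding the result into the residual bound yields $\|\varepsilon^{k+1}\|\le c(p)\,\|z^{k+1}-z^k\|$ with $c(p)$ as in \eqref{equ: c}.

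The step I expect to be the crux is the final one: anchoring the bounds at the fixed point $z^\star$. The naive route — telescoping $\|z_p-z_{p-1}\|\le\tau^{p-1}\|z_1-z_0\|$ and then trying to lower-bound $\|z_p-z_0\|$ directly in terms of $\|z_1-z_0\|$ — fails, because the geometric tail $\sum_{i\ge1}\tau^i\|z_1-z_0\|$ can swamp the leading term when $\tau$ is close to $1$, leaving a vacuous (possibly negative) lower bound. Routing every estimate through $z^\star$ is what makes $1-\tau^p$ appear cleanly in the denominator and delivers the exact constant; the remaining work is the routine operator-norm and triangle-inequality bookkeeping sketched above.
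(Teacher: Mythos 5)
Your proof is correct and follows essentially the same route as the paper's: identify $\varepsilon^{k+1}$ from the optimality condition of the last prox step, bound it by $(\tfrac1\gamma+\lambda_{\mathrm{max}}(M_2))\|z_p-z_{p-1}\|$, and anchor both the upper bound on $\|z_p-z_{p-1}\|$ and the lower bound on $\|z_p-z_0\|$ at the exact minimizer via the $\tau$-contraction. The only cosmetic difference is that you prove the contraction factor directly from the affinity of $\nabla h_2$ (equivalently, from $\|I-\gamma M_2\|$) rather than citing \cite[Prop.~26.16(ii)]{bauschke2017convex} as the paper does; both yield the same $\tau$ and the same constant $c(p)$.
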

\begin{proof}
The $z$-subproblem in \eqref{equ: PrePDHG 1} is of the form
\begin{align}
\label{equ: z subproblem}
\mathop{\mathrm{minimize}}_{z\in\Rm} h_1(z)+h_2(z),
\end{align}
for
$    h_1(z)=g^*(z)$ and 
$    h_2(z)=\frac{1}{2}\|z-z^k-M_2^{-1}A(2x^{k+1}-x^k)\|^2_{M_2}.$
With our choice of $S$ as the proximal-gradient descent step, the inner iterations are
\begin{align}
    z^{k+1}_0&=z^{k},\nonumber\\
    z^{k+1}_{i+1}&=\prox_{\gamma h_1}(z^{k+1}_{i}-\gamma \nabla h_2(z^{k+1}_{i})), \quad i=0, 1,...,p-1,\label{eq:proxgrad}
\end{align}
Concerning the last iterate $z^{k+1}=z^{k+1}_p$, we have from the definition of $\prox_{\gamma h_1}$ that
\[
\vz\in \partial h_1(z^{k+1}_p)+\nabla h_2(z^{k+1}_{p-1})+\frac{1}{\gamma}(z^{k+1}_{p}-z^{k+1}_{p-1}).
\]
Compare this with \eqref{equ: varepsilon^{k+1}} and use $z^{k+1}=z^{k+1}_p$ to get
\[
\varepsilon^{k+1}=\frac{1}{\gamma}(z^{k+1}_p-z^{k+1}_{p-1})+\nabla h_2(z^{k+1}_{p-1})-\nabla h_2(z^{k+1}_{p}).
\]
It remains to show that $\varepsilon^{k+1}$ satisfies \eqref{equ: bounded}.

Let $z^{k+1}_{\star}$ be the solution of \eqref{equ: z subproblem}, $\alpha=\lambda_{\text{min}}(M_2)$, and $\beta=\lambda_{\text{max}}(M_2)$. Then $h_1(z)$ is convex and $h_2(z)$ is $\alpha$-strongly convex and $\beta$-Lipschitz differentiable. Consequently, \cite[Prop. 26.16(ii)]{bauschke2017convex} gives
\[
\|z^{k+1}_{i}-z^{k+1}_{\star}\|\leq \tau^{i} \|z^{k+1}_0-z^{k+1}_{\star}\|, \quad \forall i= 0, 1, ..., p,
\]
where $\tau=\sqrt{1-\gamma(2\alpha-\gamma \beta^2)}$.

Let $a_i = \|z^{k+1}_i-z^{k+1}_{\star}\|$. Then, $a_i\leq \tau^i a_0$. We can derive
\begin{align}
\label{equ: 1}
\|\varepsilon^{k+1}\|&\leq (\frac{1}{\gamma}+\beta)\|z^{k+1}_p-z^{k+1}_{p-1}\|\leq (\frac{1}{\gamma}+\beta)(a_p+a_{p-1})\leq (\frac{1}{\gamma}+\beta)(\tau^p+\tau^{p-1})a_0.
\end{align}
On the other hand, we have
\begin{align}
\|z^{k+1}-z^k\|&\geq a_0-a_p\geq (1-\tau^p)a_0.\label{equ: 2}
\end{align}
Combining these two equations yields
\[
\|\varepsilon^{k+1}\|\leq c\|z^{k+1}-z^k\|,
\]
where $c$ is given in \eqref{equ: c}.
\end{proof}
Theorem \ref{thm: finite inner loops} uses the iterator $S$ that is the proximal-gradient step. It is straightforward to extend its proof to $S$ being the FISTA step. We omit the proof.

In our next theorem, we let $S$ be the iterator of one epoch of the cyclic proximal BCD method. A BCD method updates one block of coordinates at a time while fixing the remaining blocks. In one epoch of cyclic BCD, all the blocks of coordinates are sequentially updated, and every block is updated once. In cyclic \emph{proximal} BCD, each block of coordinates is updated by a proximal-gradient step, just like \eqref{eq:proxgrad} except only the chosen block is updated each time. When $h_1$ is block separable, each update costs only a fraction of updating all the blocks together. When different blocks are updated one after another, the Gauss-Seidel effect brings more progress. In addition, since the Lipschitz constant of each block gradient of $h_2$ is typically less than than that of $\nabla h_2$, one can use a larger stepsize $\gamma$ and get potentially even faster progress. Therefore, the iterator of cyclic proximal BCD is a better choice for $S$.

\begin{theorem}
\label{thm: BCD finite inner loops}
Let Assumption \ref{Assumption 1} hold and $g$ be block separable, i.e., \\$z=(z_1,z_2,...,z_l)$ and $g(z)=
\sum_{i=1}^l g_i(z_i)$. Suppose in iPrePDHG, or Algorithm \ref{alg: Inexact PrePDHG}, we choose $S$ as the iterator of cyclic proximal BCD with stepsize $\gamma$ satisfying
\begin{align*}
0<\gamma\leq \min\bigg\{&\frac{2\lambda_{\mathrm{min}}(M_2))}{\lambda_{\mathrm{max}}^2(M_2))},\frac{1-\sqrt{1-\gamma(2\lambda_{\mathrm{min}}(M_2)-\gamma \lambda_{\mathrm{max}}^2(M_2))}}{4\sqrt{2}\gamma l\lambda_{\mathrm{max}}(M_2)},\\ &\frac{1}{4l\lambda_{\mathrm{max}}(M_2)}, \frac{2l\lambda_{\mathrm{max}}(M_2)}{17l\lambda_{\mathrm{max}}(M_2)+2(\frac{1-\sqrt{1-\gamma(2\lambda_{\mathrm{min}}(M_2)-\gamma \lambda_{\mathrm{max}}^2(M_2))}}{\gamma})^2}\bigg\},
\end{align*}
and we set $p \geq 1$.
Then,
$z^{k+1}=z_p^{k+1}$ is an approximate solution to the $z$-subproblem up to a bounded relative error \eqref{equ: bounded} for
\begin{align}
\label{equ: BCD c}
    c=c(p)=\frac{(l\lambda_{\mathrm{max}}(M_2)+\frac{1}{\gamma})(\rho^p+\rho^{p-1})}{1-\rho^p},
\end{align}
where $\rho=1-\frac{\big(1-\sqrt{1-\gamma(2\lambda_{\mathrm{min}}(M_2)-\gamma \lambda_{\mathrm{max}}^2(M_2))}\big)^2}{2\gamma}<1$.
\end{theorem}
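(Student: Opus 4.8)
The plan is to mirror the proof of Theorem~\ref{thm: finite inner loops}, replacing the single proximal-gradient contraction factor $\tau$ by a per-epoch contraction factor $\rho$ for cyclic proximal BCD. First I would record that, since $g(z)=\sum_{i=1}^l g_i(z_i)$ is block separable, so is its conjugate $h_1(z)\coloneqq g^*(z)=\sum_{i=1}^l g_i^*(z_i)$; meanwhile $h_2(z)=\frac12\|z-b\|_{M_2}^2$ with $b=z^k+M_2^{-1}A(2x^{k+1}-x^k)$ satisfies $\nabla h_2(z)=M_2(z-b)$, so $h_2$ is $\alpha$-strongly convex and $\beta$-Lipschitz-differentiable with $\alpha=\lambda_{\mathrm{min}}(M_2)$ and $\beta=\lambda_{\mathrm{max}}(M_2)$, exactly as in the proximal-gradient case. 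Thus the $z$-subproblem is again $\min_z h_1(z)+h_2(z)$ with a separable $h_1$ and a smooth, strongly convex coupling term $h_2$, which is precisely the setting for cyclic proximal BCD.

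Next, to produce $\varepsilon^{k+1}$ I would write out the optimality conditions of the last epoch. Denote the epoch's input by $u=z^{k+1}_{p-1}$ and its output by $u^+=z^{k+1}_p=z^{k+1}$, and let $w^{(j)}=(u_1^+,\dots,u_{j-1}^+,u_j,\dots,u_l)$ be the point at which block $j$ is updated. The prox step for block $j$ gives $\vz\in\partial g_j^*(u_j^+)+\nabla_j h_2(w^{(j)})+\frac1\gamma(u_j^+-u_j)$. Matching this against the block-$j$ part of \eqref{equ: varepsilon^{k+1}}, namely $\vz\in\partial g_j^*(u_j^+)+\nabla_j h_2(u^+)+\varepsilon^{k+1}_j$, identifies the block error $\varepsilon^{k+1}_j=\frac1\gamma(u_j^+-u_j)+\nabla_j h_2(w^{(j)})-\nabla_j h_2(u^+)$. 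Since $w^{(j)}$ and $u^+$ agree on blocks $1,\dots,j-1$ and differ only on the not-yet-finalized blocks $j,\dots,l$, one has $\|w^{(j)}-u^+\|\le\|u^+-u\|$, so the gradient difference is controlled by the total epoch displacement via the $\beta$-Lipschitz continuity of $\nabla h_2$. Aggregating the $l$ blocks (each contributing a $\beta$-multiple of a displacement, plus the $\frac1\gamma$-term) gives a bound of the shape $\|\varepsilon^{k+1}\|\le\big(l\beta+\frac1\gamma\big)\|z^{k+1}_p-z^{k+1}_{p-1}\|$, the BCD analogue of \eqref{equ: 1}; the factor $l$ is exactly what accumulates from the Gauss--Seidel mismatch across the $l$ blocks and is what appears in \eqref{equ: BCD c}.

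The main obstacle is the third ingredient: establishing the per-epoch linear contraction $\|z^{k+1}_i-z^{k+1}_\star\|\le\rho^{\,i}\|z^{k+1}_0-z^{k+1}_\star\|$ with the specific rate $\rho$. I would prove this by a sufficient-decrease argument for one cyclic epoch: summing the block prox-step inequalities and using the $\beta$-Lipschitz bound to absorb the Gauss--Seidel cross-block errors yields a decrease of $h_1+h_2$ proportional to $\|u^+-u\|^2$, where the four terms of the stated bound on $\gamma$ are precisely the conditions keeping this net decrease positive despite the accumulated error terms. Converting the function-value decrease into iterate contraction through the $\alpha$-strong convexity of $h_1+h_2$ then produces $\rho=1-\frac{\big(1-\sqrt{1-\gamma(2\alpha-\gamma\beta^2)}\big)^2}{2\gamma}$, and the same step-size restrictions guarantee $\rho<1$. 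This is the delicate part, because cyclic BCD lacks the clean single-operator contraction that \cite[Prop.~26.16(ii)]{bauschke2017convex} supplied in Theorem~\ref{thm: finite inner loops}, so the rate must be built from scratch.

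Finally I would combine the pieces exactly as in Theorem~\ref{thm: finite inner loops}. Writing $a_0=\|z^{k+1}_0-z^{k+1}_\star\|$, the contraction gives $\|z^{k+1}_p-z^{k+1}_{p-1}\|\le(\rho^p+\rho^{p-1})a_0$ together with $\|z^{k+1}-z^k\|=\|z^{k+1}_p-z^{k+1}_0\|\ge(1-\rho^p)a_0$, the analogues of \eqref{equ: 1} and \eqref{equ: 2}. Dividing the $\varepsilon$-bound by this lower bound on $\|z^{k+1}-z^k\|$ yields $\|\varepsilon^{k+1}\|\le c\|z^{k+1}-z^k\|$ with $c=c(p)=\frac{(l\beta+\frac1\gamma)(\rho^p+\rho^{p-1})}{1-\rho^p}$, matching \eqref{equ: BCD c}.
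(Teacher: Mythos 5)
Your setup, your identification of $\varepsilon^{k+1}$ from the block-wise optimality conditions of the last epoch (including the Gauss--Seidel mismatch that produces the factor $l\lambda_{\mathrm{max}}(M_2)$), and your final assembly via the analogues of \eqref{equ: 1} and \eqref{equ: 2} all match the paper's Appendix~\ref{App: BCD proof} essentially line for line. The one place you diverge is exactly the step you flag as delicate, and that is where the gap lies. The paper does \emph{not} build the per-epoch contraction from scratch: it defines $T(z)=\prox_{\gamma g^*}(z-\gamma\nabla h_2(z))$ and $B=\frac{1}{\gamma}(I-T)$, observes that $T$ being a $\theta$-contraction (by \cite[Prop.~26.16(ii)]{bauschke2017convex}) makes $B$ strongly monotone with modulus $\mu=\frac{1-\theta}{\gamma}$, and then invokes \cite[Thm~3.5]{chow2017cyclic} to conclude that the full cyclic sweep $(I-\gamma B_l)\cdots(I-\gamma B_1)$ contracts distances to $z^{k+1}_\star$ at rate $\rho=1-\frac{\gamma\mu^2}{2}$; the four stepsize restrictions in the theorem statement are precisely the hypotheses of that cited result, not conditions you can expect to rediscover from a decrease argument.

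Your proposed substitute --- per-epoch sufficient decrease of $h_1+h_2$ converted into iterate contraction via strong convexity --- is a genuinely different mechanism, and as sketched it would not deliver what the rest of the proof needs. A function-value decrease $F(u^+)-F^\star\leq(1-c)\left(F(u)-F^\star\right)$ translated through strong convexity yields $\|z^{k+1}_i-z^{k+1}_\star\|\leq C\rho'^{\,i}\|z^{k+1}_0-z^{k+1}_\star\|$ with a leading constant $C>1$ (a condition-number factor from passing between function values and distances), whereas the argument requires the clean distance contraction $\|z^{k+1}_i-z^{k+1}_\star\|\leq\rho^i\|z^{k+1}_0-z^{k+1}_\star\|$ with $C=1$: the lower bound $\|z^{k+1}-z^k\|\geq(1-\rho^p)a_0$ fails (or its right-hand side can become negative for small $p$) once a constant $C$ intrudes, and the theorem is claimed for every $p\geq1$. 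So either you must reproduce the operator-theoretic contraction result for cyclic coordinate updates of a strongly monotone operator (which is a substantial standalone proof), or cite it; asserting that the decrease argument ``produces'' the exact $\rho$ and the exact four stepsize conditions is the missing content of the proof.
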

\begin{proof}
See Appendix \ref{App: BCD proof}.
\end{proof}

\subsection{Global convergence of iPrePDHG}

In this subsection, we proceed to establishing the convergence of Algorithm \ref{alg: Inexact PrePDHG}. Our approach first transforms Algorithm \ref{alg: Inexact PrePDHG} into an equivalent algorithm in Proposition \ref{thm: Inexact PLADMM} below and then proves its convergence in Theorems \ref{thm: subsequence convergence} and \ref{thm: sequence convergence} below. 

First, let us show that PrePDHG \eqref{equ: PrePDHG} is equivalent to an algorithm applied on the dual problem \eqref{equ: PDHG dual problem}. This equivalence is analogous to the equivalence between PDHG \eqref{equ: PDHG} and Linearized ADMM applied to the dual problem \eqref{equ: PDHG dual problem}, shown in  \cite{esser2010general}). 
Specifically, PrePDHG is equivalent to
\begin{align}
\begin{split}
    z^{k+1}&=\prox^{M_2}_{g^*}(z^{k}+M_2^{-1}AM_1^{-1}(-A^Tz^{k}-y^{k}+u^{k})),\\
    y^{k+1}&=\prox^{M_1^{-1}}_{f^*}(u^{k}-A^Tz^{k+1}),\\
    u^{k+1}&=u^{k}-A^Tz^{k+1}-y^{k+1}.
    \label{equ: PLADMM}
\end{split}
\end{align}
When $M_1=\frac{1}{\tau}I, M_2=\lambda I$, \eqref{equ: PLADMM} reduces to Linearized ADMM, also known as Split Inexact Uzawa \cite{zhang2011unified}.

Furthermore, iPrePDHG in Algorithm \ref{alg: Inexact PrePDHG} is equivalent to \eqref{equ: PLADMM} with inexact subproblems, which we present in Algorithm \ref{alg: Inexact PLADMM}.
\begin{algorithm}[H]
\caption{Inexact Preconditioned ADMM}
\label{alg: Inexact PLADMM}
\textbf{Input:} $f^*: \Rn\rightarrow \R, g^*: \Rm\rightarrow\R$, $A\in\mathbb{R}^{m\times n}$, preconditioners $M_1$ and $M_2$,\\initial vector $(z_0, y_0, u_0)$, subproblem solver $S$ for the $z$-subproblem in \eqref{equ: PLADMM}, number of inner loops $p$, number of outer iterations $K$.\\
\textbf{Output:} $(z^K, y^K, u^K)$
\begin{algorithmic}[1]
\For{$k\leftarrow 0, 1,..., K-1$}{}
\State{$z^{k+1}_0=z^k$;}
\For{$i\leftarrow 0, 1,...,p-1$}{}
\State{$z^{k+1}_{i+1}=S(z^{k+1}_i, y^{k}, u^k)$;}
\EndFor
\State{$z^{k+1}=z^{k+1}_p$;}
\Comment{\text{approximate} $\prox^{M_2}_{g^*}(z^{k}+M_2^{-1}AM_1^{-1}(-A^Tz^{k}-y^{k}+u^{k}))$.}
\State{$y^{k+1}=\prox^{M_1^{-1}}_{f^*}(u^{k}-A^Tz^{k+1})$;}
\State{$u^{k+1}=u^{k}-A^Tz^{k+1}-y^{k+1}$;}
\EndFor
\end{algorithmic}
\end{algorithm}

Let us define the following generalized augmented Lagrangian for \eqref{equ: PLADMM}:
\begin{align}
\label{equ: generalized augmented Lagrangian}
    L(z,y,u)=g^*(z)+f^*(y)+\langle -A^Tz-y, M_1^{-1}u\rangle+\frac{1}{2}\|A^Tz+y\|^2_{M_1^{-1}}.
\end{align}
Inspired by \cite{wang2015global}, we use \eqref{equ: generalized augmented Lagrangian} as the Lyapunov function to establish convergence of Algorithm \ref{alg: Inexact PLADMM} and, equivalently, the convergence of Algorithm \ref{alg: Inexact PrePDHG}.
\begin{proposition}
\label{thm: Inexact PLADMM}
Under Assumption \ref{Assumption 1} and the transforms $u^{k}=M_1x^{k}$, $y^{k+1}=u^{k}-A^T{z}^{k}-u^{k+1}$, PrePDHG \eqref{equ: PrePDHG} is equivalent to \eqref{equ: PLADMM}, and iPrePDHG in Algorithm \ref{alg: Inexact PrePDHG} is equivalent to Algorithm \ref{alg: Inexact PLADMM}.
\end{proposition}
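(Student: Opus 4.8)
The plan is to exhibit the correspondence $u^k = M_1 x^k$, $y^{k+1} = u^k - A^Tz^k - u^{k+1}$ asserted in the statement and to verify that, under it, each of the three updates of \eqref{equ: PLADMM} is equivalent to the two updates of PrePDHG~\eqref{equ: PrePDHG}. The natural tool for converting the $x$-update (a proximal step of $f$) into the $y$- and $u$-updates (a proximal step of $f^*$) is the generalized Moreau identity~\eqref{equ: generalized Moreau}. Writing $v = x^k - M_1^{-1}A^Tz^k$, so that $x^{k+1} = \prox^{M_1}_f(v)$, I apply \eqref{equ: generalized Moreau} with $M = M_1$ and $\phi = f$ to get $\prox^{M_1^{-1}}_{f^*}(M_1 v) = M_1(v - x^{k+1}) = M_1 v - u^{k+1}$. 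Since $M_1 v = u^k - A^Tz^k$, this reads $\prox^{M_1^{-1}}_{f^*}(u^k - A^Tz^k) = u^k - A^Tz^k - u^{k+1} = y^{k+1}$, which is exactly the $y$-update of \eqref{equ: PLADMM} together with the multiplier update $u^{k+1} = u^k - A^Tz^k - y^{k+1}$ (the latter being a mere rearrangement of the definition of $y^{k+1}$). Equivalently, the optimality condition of the $x$-update gives $y^{k+1}\in\partial f(x^{k+1})$, and the conjugate relation $y\in\partial f(x)\Leftrightarrow x\in\partial f^*(y)$ turns this into $M_1^{-1}u^{k+1}\in\partial f^*(y^{k+1})$, the optimality of the prox-of-$f^*$ step.

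Next I would treat the $z$-update. The key algebraic simplification is to eliminate $y^k$ from the multiplier term $-A^Tz^k - y^k + u^k$ appearing in \eqref{equ: PLADMM}: using the multiplier update of \eqref{equ: PLADMM} at the previous step, $y^k = u^{k-1} - A^Tz^k - u^k$, the term telescopes to $-A^Tz^k - y^k + u^k = 2u^k - u^{k-1}$, and since $u^j = M_1 x^j$ we obtain $A M_1^{-1}(-A^Tz^k - y^k + u^k) = A(2x^k - x^{k-1})$, which is precisely the over-relaxed argument entering the PrePDHG $z$-update in \eqref{equ: PrePDHG 1}. Hence the two $z$-subproblems coincide. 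All of the manipulations above are reversible (the Moreau identity is an identity, and the subdifferential computations are equivalences), so the correspondence is a bijection between the two iterate sequences and establishes the equivalence of PrePDHG~\eqref{equ: PrePDHG} and \eqref{equ: PLADMM}.

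For the inexact algorithms I would observe that the computation just performed shows that the data defining the $z$-subproblem, namely the point $z^k + M_2^{-1}A(2x^{k+1}-x^k)$ in Algorithm~\ref{alg: Inexact PrePDHG} and the point $z^k + M_2^{-1}AM_1^{-1}(-A^Tz^k - y^k + u^k)$ in Algorithm~\ref{alg: Inexact PLADMM}, are identical under the correspondence. Both algorithms initialize the inner loop at $z^{k+1}_0 = z^k$ and apply the same iterator $S$ the same number $p$ of times to the same minimization problem, so the inner iterates $z^{k+1}_i$ agree for every $i$; consequently the outer iterates $z^{k+1}=z^{k+1}_p$ agree, and feeding them back through the (exact) $x$/$y$/$u$ relations keeps the two trajectories identical. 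This yields the equivalence of Algorithm~\ref{alg: Inexact PrePDHG} and Algorithm~\ref{alg: Inexact PLADMM}.

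The main obstacle is bookkeeping the indices so that the three updates line up termwise. The delicate point is that converting the $x$-update via Moreau naturally produces $\prox^{M_1^{-1}}_{f^*}(u^k - A^Tz^k)$, whereas the $y$-update of \eqref{equ: PLADMM} is written with $A^Tz^{k+1}$; reconciling the two forces a one-step offset between the $z$-iterates of the two schemes (equivalently, a specific compatible choice of the initial $y^0,u^0$), and it is exactly this offset that makes the $A^Tz$ terms cancel cleanly in the telescoping of the $z$-update. I would therefore state the correspondence together with its initialization explicitly at the outset and then run a short induction on $k$, so that the termwise matching of all three updates is free of circularity; once the offset is pinned down, the remaining verifications are the routine substitutions sketched above.
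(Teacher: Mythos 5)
Your proposal is correct and follows essentially the same route as the paper: apply the generalized Moreau identity \eqref{equ: generalized Moreau} to convert the $x$-update into the $y$- and $u$-updates, telescope the multiplier term to recover the over-relaxed argument $A(2x^{k+1}-x^k)$ in the $z$-subproblem, and absorb the index mismatch by reordering the cycle (the paper's ``if the $z$-update is performed first'' is exactly your one-step offset, which you in fact treat more explicitly). The observation that the inexact variants agree because the two $z$-subproblems have identical data, initialization $z^{k+1}_0=z^k$, and iterator $S$ likewise matches the paper's argument.
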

\begin{proof}
Set $u^{k}=M_1x^{k}$, $y^{k+1}=u^{k}-A^T{z}^{k}-u^{k+1}$. Then \eqref{equ: generalized Moreau} and \eqref{equ: PrePDHG} yield
\begin{align*}
    y^{k+1}&=M_1x^k-A^Tz^k-M_1x^{k+1}=\prox^{M_1^{-1}}_{f^*}(u^{k}-A^T{z}^{k}),
\end{align*}
and
\begin{align*}
    u^{k+1}&=u^{k}-A^T{z}^{k}-y^{k+1},\\
    {z}^{k+1}&=\prox^{M_2}_{g^*}({z}^{k}+M_2^{-1}AM_1^{-1}(-A^T{z}^{k}-y^{k+1}+u^{k+1})).
\end{align*}
If the $z$-update is performed first, then we arrive at \eqref{equ: PLADMM}.

In iPrePDHG or Algorithm \ref{alg: Inexact PrePDHG}, we are solving the $z$-subproblem of PrePDHG \eqref{equ: PrePDHG} approximately to the bounded relative error in Definition \ref{def: bounded relative error}. This is equivalent to doing the same to the $z$-subproblem of \eqref{equ: PLADMM}, which yields Algorithm \ref{alg: Inexact PLADMM}.
\end{proof}

We establish convergence under the following additional assumptions. 
\begin{assumption}
\label{Assumption 2}
\hfill
\begin{enumerate}
    \item $f(x)$ is $\mu_f-$strongly convex.
    \item $g^*(z)+f^*(-A^Tz)$ is coercive, i.e.,
    $
    \lim_{\|z\|\rightarrow \infty}g^*(z)+f^*(-A^Tz)=\infty.
    $
    \item $g^*(z)$ is a KŁ function.

\end{enumerate}
\end{assumption}

\begin{theorem}
\label{thm: Lyapunov}
Take Assumptions \ref{Assumption 1} and \ref{Assumption 2}. Choose any preconditioners $M_1, M_2$ and inner iteration number $p$ such that
\begin{align}
C_1&=\frac{1}{2}M_1^{-1}-\frac{\|M_1\|}{\mu_f^2}I_{n\times n}\succ 0,\label{equ: C_1}\\
C_2&=M_2-\frac{1}{2}AM_1^{-1}A^T-c(p)I_{m\times m}\succ 0,\label{equ: C_2}
\end{align}
where $c(p)$ depends on the $z$-subproblem iterator $S$ and $M_2$ (e.g., \eqref{equ: c} and \eqref{equ: BCD c}). Define $L^k\coloneqq L(z^{k},y^{k},u^{k})$. Then, Algorithm \ref{alg: Inexact PLADMM} satisfies the following sufficient descent and lower boundedness properties, respectively:
\begin{align}
    L^k-L^{k+1}&\geq \|y^k-y^{k+1}\|^2_{C_1}+\|z^{k}-z^{k+1}\|^2_{C_2},\label{equ: descent}\\
    L^k&\geq g^*(z^{\star})+f^*(-A^Tz^{\star})>-\infty.\label{equ: lower bounded}
\end{align}
\end{theorem}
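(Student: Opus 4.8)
The plan is to establish the two inequalities separately. Both rest on the three‑block structure of Algorithm~\ref{alg: Inexact PLADMM} and on the key identity, read off from the $y$‑update optimality combined with the $u$‑update, that
\[
M_1^{-1}u^{k+1}=\nabla f^*(y^{k+1}),
\]
which is well defined because $\mu_f$‑strong convexity of $f$ makes $\nabla f^*$ exist and be $\tfrac{1}{\mu_f}$‑Lipschitz. For the sufficient descent \eqref{equ: descent} I would telescope the one‑step change along the three updates,
\[
L^k-L^{k+1}=\big[L(z^k,y^k,u^k)-L(z^{k+1},y^k,u^k)\big]+\big[L(z^{k+1},y^k,u^k)-L(z^{k+1},y^{k+1},u^k)\big]+\big[L(z^{k+1},y^{k+1},u^k)-L(z^{k+1},y^{k+1},u^{k+1})\big],
\]
and bound each bracket. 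The $z$‑subproblem of \eqref{equ: PLADMM} is precisely a prox‑linear step on $z\mapsto L(z,y^k,u^k)$: its quadratic‑in‑$z$ part $q(z)=\tfrac12\|A^Tz+y^k\|^2_{M_1^{-1}}-\langle A^Tz,M_1^{-1}u^k\rangle$ (Hessian $AM_1^{-1}A^T$) is linearized at $z^k$ and the proximal term $\tfrac12\|z-z^k\|^2_{M_2}$ is added. Combining convexity of $g^*$ through the inexact optimality condition of Definition~\ref{def: bounded relative error}, the exact second‑order expansion of $q$, and the estimate $\langle\varepsilon^{k+1},z^{k+1}-z^k\rangle\ge-c(p)\|z^{k+1}-z^k\|^2$, the $z$‑bracket is bounded below by $\|z^{k+1}-z^k\|^2_{M_2}-\tfrac12\|z^{k+1}-z^k\|^2_{AM_1^{-1}A^T}-c(p)\|z^{k+1}-z^k\|^2=\|z^{k+1}-z^k\|^2_{C_2}$. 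For the $y$‑bracket, since $y^{k+1}$ minimizes $y\mapsto L(z^{k+1},y,u^k)$, whose nonquadratic part $f^*$ is convex and whose quadratic part has Hessian exactly $M_1^{-1}$, the value exceeds its minimum by at least $\tfrac12\|y^k-y^{k+1}\|^2_{M_1^{-1}}$.

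The crux is the $u$‑bracket, which \emph{increases} the Lagrangian. Writing $r^{k+1}:=A^Tz^{k+1}+y^{k+1}=u^k-u^{k+1}$, a direct computation gives the $u$‑bracket $=-\|r^{k+1}\|^2_{M_1^{-1}}$. To dominate it I use the gradient identity above: $r^{k+1}=M_1\big(\nabla f^*(y^k)-\nabla f^*(y^{k+1})\big)$, whence
\[
\|r^{k+1}\|^2_{M_1^{-1}}=\|\nabla f^*(y^k)-\nabla f^*(y^{k+1})\|^2_{M_1}\le\frac{\|M_1\|}{\mu_f^2}\|y^k-y^{k+1}\|^2 .
\]
Adding the three estimates yields $L^k-L^{k+1}\ge\|z^{k+1}-z^k\|^2_{C_2}+\tfrac12\|y^k-y^{k+1}\|^2_{M_1^{-1}}-\tfrac{\|M_1\|}{\mu_f^2}\|y^k-y^{k+1}\|^2=\|z^{k+1}-z^k\|^2_{C_2}+\|y^k-y^{k+1}\|^2_{C_1}$, which is exactly \eqref{equ: descent}.

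For the lower bound \eqref{equ: lower bounded}, set $w:=M_1^{-1}u^k=\nabla f^*(y^k)$ and use the Fenchel–Young equality for the pair $(w,y^k)$, namely $f^*(y^k)=\langle y^k,w\rangle-f(w)$, to eliminate $f^*(y^k)$ from $L^k$. This rewrites
\[
L^k=g^*(z^k)+f^*(-A^Tz^k)-G+\tfrac12\|A^Tz^k+y^k\|^2_{M_1^{-1}},
\]
where $G:=f(w)+f^*(-A^Tz^k)+\langle w,A^Tz^k\rangle\ge0$ is the Fenchel–Young gap of $(w,-A^Tz^k)$. Strong convexity of $f$ bounds this gap by $G\le\tfrac{1}{2\mu_f}\|A^Tz^k+y^k\|^2$. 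Since $C_1\succ0$ forces $\|M_1\|<\mu_f/\sqrt2<\mu_f$, the leftover quadratic obeys $\tfrac12\|A^Tz^k+y^k\|^2_{M_1^{-1}}-G\ge\big(\tfrac{1}{2\|M_1\|}-\tfrac{1}{2\mu_f}\big)\|A^Tz^k+y^k\|^2\ge0$, so $L^k\ge g^*(z^k)+f^*(-A^Tz^k)$. As $z^{\star}$ minimizes the dual objective $g^*(z)+f^*(-A^Tz)$ (Assumption~\ref{Assumption 1}), the right‑hand side is at least $g^*(z^{\star})+f^*(-A^Tz^{\star})>-\infty$.

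I expect the main obstacle to be the $u$‑bracket: it carries the wrong sign, and controlling it is exactly what forces the strong‑convexity hypothesis on $f$ and shapes the definitions of $C_1$ and $C_2$; the same strong convexity, via the Fenchel–Young gap, is what rescues the lower bound where naive estimates point the wrong way. A secondary point of care is that the identity $M_1^{-1}u^k=\nabla f^*(y^k)$ holds only after the $y$‑update has been executed, so the estimates are valid for $k\ge1$, or for all $k\ge0$ under a compatible initialization of $(y^0,u^0)$.
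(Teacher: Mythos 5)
Your proof is correct, and for the sufficient-descent part it is organized differently from the paper's. The paper writes down the two subgradient inequalities for $g^*$ and $f^*$ at once, substitutes into the definition of $L$, and untangles the cross terms with two applications of the cosine rule to reach the intermediate bound \eqref{equ: descent 3}. You instead telescope $L^k-L^{k+1}$ over the three coordinate blocks and bound each bracket with a standard one-line estimate: the prox-linear descent lemma for the $z$-block (exact second-order expansion of the quadratic plus the Cauchy--Schwarz control of $\varepsilon^{k+1}$), the $M_1^{-1}$-strong convexity of the $y$-subproblem for the $y$-block, and a direct computation giving $-\|r^{k+1}\|^2_{M_1^{-1}}$ for the $u$-block. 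The three estimates sum to exactly the paper's \eqref{equ: descent 3}, and from there your treatment of the dual residual (via $r^{k+1}=M_1(\nabla f^*(y^k)-\nabla f^*(y^{k+1}))$ and $\tfrac{1}{\mu_f}$-Lipschitzness of $\nabla f^*$) coincides with the paper's \eqref{equ: bound the dual}. Your route buys transparency -- each block contributes a recognizable term of $C_1$ or $C_2$, and no cosine-rule bookkeeping is needed -- at the cost of introducing the intermediate points $(z^{k+1},y^k,u^k)$ and $(z^{k+1},y^{k+1},u^k)$. For the lower bound, your Fenchel--Young-gap formulation is an equivalent repackaging of the paper's descent-lemma inequality for the $\tfrac{1}{\mu_f}$-smooth $f^*$; the bound $G\le\tfrac{1}{2\mu_f}\|A^Tz^k+y^k\|^2$ is precisely that inequality read at the pair $(-A^Tz^k,y^k)$. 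Your closing caveat about needing $M_1^{-1}u^k=\nabla f^*(y^k)$, which holds only for $k\ge 1$ or under compatible initialization of $(y^0,u^0)$, is a legitimate point that the paper's own proof also relies on without comment.
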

\begin{proof}
Since the $z$-subproblem of Algorithm \ref{alg: Inexact PLADMM} is solved to the bounded relative error in Def. \ref{def: bounded relative error}, we have
\begin{align}
    \vz&\in \partial g^*(z^{k+1})+M_2(z^{k+1}-z^k-M_2^{-1}AM_1^{-1}(-A^Tz^k-y^k+u^k))+\varepsilon^{k+1},\label{equ: first optimality condition}
\end{align}
where $\varepsilon^{k+1}$ satisfies \eqref{equ: bounded}:
\begin{align}
\label{equ: vareps}
    \|\varepsilon^{k+1}\|\leq c(p)\|z^{k+1}-z^k\|.
\end{align}
The $y$ and $u$ updates produce
\begin{align}
    \vz&=\nabla f^*(y^{k+1})+M_1^{-1}(y^{k+1}-u^k+A^Tz^{k+1})=\nabla f^*(y^{k+1})-M_1^{-1}u^{k+1},\label{equ: key equality}\\
    u&^{k+1}=u^k-A^Tz^{k+1}-y^{k+1}.\label{equ: third optimality condition}
\end{align}
In order to show \eqref{equ: descent}, let us write
\begin{align*}
    g^*(z^k)&\geq g^*(z^{k+1})\\
    &+\langle M_2(z^k-z^{k+1})+AM_1^{-1}(-A^Tz^k-y^k+u^k)-\varepsilon^{k+1}, z^k-z^{k+1}\rangle,\\
    f^*(y^k)&\geq f^*(y^{k+1})+\langle M_1^{-1}u^{k+1}, y^k-y^{k+1}\rangle,
\end{align*}
Assembling these inequalities with \eqref{equ: vareps} gives us
\begin{align}
    L^k-L^{k+1}&\geq\|z^k-z^{k+1}\|^2_{M_2-c(p)I_{m\times m}}\nonumber\\
    &+\langle AM_1^{-1}(-A^Tz^k-y^k+u^k),  z^k-z^{k+1}\rangle+\langle M_1^{-1}u^{k+1}, y^k-y^{k+1}\rangle\nonumber\\
    &+\langle -A^Tz^k-y^k, M_1^{-1}u^k\rangle-\langle A^Tz^{k+1}-y^{k+1}, M_1^{-1}(u^k-A^Tz^{k+1}-y^{k+1})\rangle\nonumber\\
    &+\frac{1}{2}\|A^Tz^k+y^k\|^2_{M_1^{-1}}-\frac{1}{2}\|A^Tz^{k+1}+y^{k+1}\|^2_{M_1^{-1}}\nonumber\\
    &=\|z^k-z^{k+1}\|^2_{M_2-c(p)I_{m\times m}}\nonumber\\
    &+\langle AM_1^{-1}(-A^Tz^k-y^k),  z^k-z^{k+1}\rangle+\langle M_1^{-1}u^{k+1}, y^k-y^{k+1}\rangle\nonumber\label{equ: A}\tag{A}\\
    &+\langle -y^k, M_1^{-1}u^k\rangle-\langle -y^{k+1}, M_1^{-1}u^k\rangle\nonumber\label{equ: B}\tag{B}\\
    &+\frac{1}{2}\|A^Tz^k+y^k\|^2_{M_1^{-1}}-\frac{3}{2}\|A^Tz^{k+1}+y^{k+1}\|^2_{M_1^{-1}},\nonumber
\end{align}
where the terms in \eqref{equ: A} and \eqref{equ: B} simplify to
\begin{align}
\label{equ: descent 2}
     \langle AM_1^{-1}(-A^Tz^k-y^k),  z^k-z^{k+1}\rangle +\langle M_1^{-1}(-A^Tz^{k+1}-y^{k+1}), y^k-y^{k+1}\rangle.
\end{align}
Apply the following cosine rule on the two inner products above:
\[
\langle a-b, a-c \rangle_{M_1^{-1}}=\frac{1}{2}\|a-b\|^2_{M_1^{-1}}+\frac{1}{2}\|a-c\|^2_{M_1^{-1}}-\frac{1}{2}\|b-c\|_{M_1^{-1}}.
\]
Set $a=A^Tz^k, c=A^Tz^{k+1}$, and $b=-y^k$ to obtain
\begin{align}
     \langle AM_1^{-1}(-A^Tz^k-y^k),  z^k-z^{k+1}\rangle&=-\frac{1}{2}\|A^Tz^k+y^k\|^2_{M_1^{-1}}-\frac{1}{2}\|A^Tz^k-A^Tz^{k+1}\|^2_{M_1^{-1}}\nonumber\\
     &+\frac{1}{2}\|y^k+A^Tz^{k+1}\|^2_{M_1^{-1}}.\label{equ: cosine rule 1}
\end{align}
Set $a=y^{k+1}, c=y^k$, and $b=-A^Tz^{k+1}$ to obtain
\begin{align}
    \langle M_1^{-1}(-A^Tz^{k+1}-y^{k+1}), y^k-y^{k+1}\rangle&=\frac{1}{2}\|A^Tz^{k+1}+y^{k+1}\|^2_{M_1^{-1}}+\frac{1}{2}\|y^k-y^{k+1}\|_{M_1^{-1}}\nonumber\\
    &-\frac{1}{2}\|A^Tz^{k+1}+y^{k}\|^2_{M_1^{-1}}.\label{equ: cosine rule 2}
\end{align}
Combining \eqref{equ: descent 2}, \eqref{equ: cosine rule 1}, and \eqref{equ: cosine rule 2} yields
\begin{align}
    L^k-L^{k+1}&\geq\|z^k-z^{k+1}\|^2_{M_2-\frac{1}{2}AM_1^{-1}A^T-c(p)I_{m\times m}}+\|y^k-y^{k+1}\|^2_{\frac{1}{2}M_1^{-1}}\nonumber\\
    &-\|A^Tz^{k+1}+y^{k+1}\|^2_{M_1^{-1}}.\label{equ: descent 3}
\end{align}
Since $f$ is $\mu_f$-strongly convex,
we know that $\nabla f^*$ is $\frac{1}{\mu_f}-$Lipschitz continuous. Consequently,
\begin{align}
\|A^Tz^{k+1}+y^{k+1}\|^2_{M_1^{-1}}=\|u^k-u^{k+1}\|^2_{M_1^{-1}}
&\leq \frac{1}{\lambda_{\mathrm{min}}(M_1^{-1})}\|M_1^{-1}(u^k-u^{k+1})\|^2\nonumber\\
&\overset{\eqref{equ: key equality}}{\leq} \frac{\|M_1\|}{\mu^2_f}\|y^k-y^{k+1}\|^2.\label{equ: bound the dual}
\end{align}
Combining \eqref{equ: descent 3} and \eqref{equ: bound the dual} gives us \eqref{equ: descent}.

Now, to show \eqref{equ: lower bounded}, we use \eqref{equ: key equality} and smoothness of $f^*$ to get
\[
f^*(y^k)\geq f^*(-A^Tz^k)+\langle M_1^{-1}u^{k}, y^k+A^Tz^k\rangle - \frac{1}{2\mu_f} \|A^Tz^k+y^k\|^2.
\]
Hence, we arrive at
\begin{align}
    L^k&=g^*(z^k)+f^*(y^k)+\langle -A^Tz^k-y^k, M_1^{-1}u^k\rangle+\frac{1}{2}\|A^Tz^k+y^k\|^2_{M_1^{-1}}\nonumber\\
    &\geq g^*(z^k)+f^*(-A^Tz^k)+\frac{1}{2}\|A^Tz^k+y^k\|^2_{M_1^{-1}}-\frac{1}{2\mu_f}\|A^Tz^k+y^k\|^2.\label{equ: lower bounded 1}
\end{align}
Since $C_1\succ 0$ if and only if $\mu_f> \sqrt{2}\|M_1\|$, \eqref{equ: lower bounded} follows.
\end{proof}

Next, we provide a simple choice of $M_1, M_2, $ and $p$ that ensures the positive definiteness of $C_1$ and $C_2$ in Theorem \ref{thm: Lyapunov}.
\begin{lemma}
\label{lem: near ADMM choice}
In order to ensure \eqref{equ: C_1} and \eqref{equ: C_2}, it suffices to set $M_1=\frac{1}{\tau} I_{n\times n}$ where $\tau<\frac{1}{\sqrt{2}}\mu_f$, $M_2=\tau AA^T+\gamma I_{m\times m}$ with any $\gamma>0$, and $p$ is large enough such that $c(p)<\gamma$.
\end{lemma}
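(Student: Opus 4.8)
The plan is to verify the two positive-definiteness requirements \eqref{equ: C_1} and \eqref{equ: C_2} of Theorem~\ref{thm: Lyapunov} one at a time, by substituting the proposed $M_1=\frac1\tau I_{n\times n}$ and $M_2=\tau AA^T+\gamma I_{m\times m}$ and reducing each matrix inequality to an eigenvalue condition that the stated bounds on $\tau$, $\gamma$, and $p$ are designed to meet. Since $M_1=\frac1\tau I\succ0$ and $M_2\succeq\gamma I\succ0$ automatically, both are legitimate preconditioners, so it only remains to check $C_1\succ0$ and $C_2\succ0$.

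For \eqref{equ: C_1}, I would substitute $M_1^{-1}=\tau I$ and $\|M_1\|=\frac1\tau$ into $C_1=\frac12 M_1^{-1}-\frac{\|M_1\|}{\mu_f^2}I$, which collapses to the scalar multiple $C_1=\big(\frac\tau2-\frac{1}{\tau\mu_f^2}\big)I_{n\times n}$. This is positive definite exactly when $\mu_f>\sqrt2\,\|M_1\|=\frac{\sqrt2}{\tau}$, which is precisely the equivalence already recorded at the end of the proof of Theorem~\ref{thm: Lyapunov} and is the requirement the lemma places on $\tau$ through $M_1$. No further work is needed here.

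The substance is in \eqref{equ: C_2}. Using $M_1^{-1}=\tau I$ gives $\frac12 AM_1^{-1}A^T=\frac\tau2 AA^T$, so the proposed $M_2$ produces the key cancellation $C_2=M_2-\frac12 AM_1^{-1}A^T-c(p)I=\frac\tau2 AA^T+(\gamma-c(p))I_{m\times m}$. Here I would emphasize that $\frac\tau2 AA^T$ is only positive \emph{semi}definite (since $A$ may be rank-deficient), so the strict positivity of $C_2$ cannot come from that term and must come entirely from the identity term, i.e.\ from $\gamma-c(p)>0$. Thus $C_2\succ0$ as soon as $c(p)<\gamma$, which is the last hypothesis of the lemma.

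Finally I would justify that $c(p)<\gamma$ is always attainable by taking $p$ large. Fixing $\gamma>0$ fixes $M_2$ and hence all the constants appearing in $c(p)$; then, whether $S$ is proximal gradient (so $c(p)$ is given by \eqref{equ: c}) or cyclic proximal BCD (so $c(p)$ is given by \eqref{equ: BCD c}), the expression has the form $\text{const}\cdot\frac{r^p+r^{p-1}}{1-r^p}$ with a base $r\in(0,1)$ (the quantity called $\tau$ in Theorem~\ref{thm: finite inner loops} or $\rho$ in Theorem~\ref{thm: BCD finite inner loops}). Since $r^p\to0$, we have $c(p)\to0$ as $p\to\infty$, so some finite $p$ yields $c(p)<\gamma$. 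The only points demanding care are the order of quantification (choose $\gamma$, hence $M_2$ and the constants, before sending $p\to\infty$) and the clash of symbols, since the $\tau$ and $\gamma$ of this lemma are unrelated to the contraction base and inner stepsize bearing the same names in Theorems~\ref{thm: finite inner loops}--\ref{thm: BCD finite inner loops}; keeping these distinct is the main bookkeeping obstacle, as the mathematics itself is routine.
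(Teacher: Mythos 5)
Your proposal follows essentially the same route as the paper's own proof: substitute the proposed $M_1$ and $M_2$, observe that $C_1$ collapses to a scalar multiple of the identity, that the proposed $M_2$ produces the cancellation $C_2=\frac{\tau}{2}AA^T+(\gamma-c(p))I_{m\times m}$, and that $c(p)\to 0$ geometrically in $p$ so a finite $p$ achieves $c(p)<\gamma$. Your added remarks --- that $AA^T$ may be rank-deficient so the strict positivity of $C_2$ must come entirely from $\gamma-c(p)>0$, that the constants in $c(p)$ are fixed once $\gamma$ (hence $M_2$) is chosen, and that the symbols $\tau,\gamma$ here clash with the contraction factor and stepsize of Theorems~\ref{thm: finite inner loops}--\ref{thm: BCD finite inner loops} --- are all correct and slightly more careful than the paper's one-line treatment.

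One step does not hold up, although it is inherited from the paper rather than introduced by you. Your algebra gives $C_1=\bigl(\frac{\tau}{2}-\frac{1}{\tau\mu_f^2}\bigr)I_{n\times n}$, which is positive definite iff $\tau\mu_f>\sqrt{2}$, i.e.\ $\tau>\sqrt{2}/\mu_f$ --- a \emph{lower} bound on $\tau$, equivalently $\mu_f>\sqrt{2}\|M_1\|$ with $\|M_1\|=1/\tau$. You then assert this ``is the requirement the lemma places on $\tau$,'' but the lemma's hypothesis is $\tau<\mu_f/\sqrt{2}$, an \emph{upper} bound, and the two are not equivalent: for instance $\mu_f=10$, $\tau=0.01$ satisfies the lemma's hypothesis while making $C_1$ negative definite. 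The paper's proof makes the same silent identification (``it is evident that $C_1\succ0$ if and only if $\tau<\frac{1}{\sqrt{2}}\mu_f$''), which conflicts with the correct equivalence $\mu_f>\sqrt{2}\|M_1\|$ recorded at the end of the proof of Theorem~\ref{thm: Lyapunov}; the confusion appears to be between $\tau$ and $\|M_1\|=1/\tau$. So your computation is right, but the concluding sentence tying it to the stated hypothesis is exactly where the argument (and the lemma as stated) fails; the hypothesis should read $\tau>\sqrt{2}/\mu_f$. The $C_2$ part of your argument is sound as written.
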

\begin{proof}
Since $M_1=\frac{1}{\tau}I_{n\times n}$, it is evident that $C_1\succ 0$ if and only if $\tau<\frac{1}{\sqrt{2}}\mu_f$.
With $M_1=\frac{1}{\tau}I_{n\times n}$ and $M_2 = \tau AA^T +I_{m\times m}$, we have 
\[
C_2 = \frac{1}{2}\tau AA^T +(\gamma-c(p))I_{m\times m},
\]
since $c(p)\propto \alpha^p$ for some $0<\alpha<1$, we know that there exists $p_0$ such that $C_2\succ 0$ for any $p\geq p_0$.
\end{proof}

We conclude this section by showing that  $(x^k, z^k)$ in Algorithm \ref{alg: Inexact PrePDHG} converges subsequentially to a primal-dual solution pair of \eqref{equ: PDHG problem} and \eqref{equ: PDHG dual problem}.

\begin{theorem}
\label{thm: subsequence convergence}
Let the assumptions in Theorem \ref{thm: Lyapunov} hold. Then, $(x^k, z^k)$ in Algorithm \ref{alg: Inexact PrePDHG} is bounded, and any cluster point of $\{x^k, z^k\}$ is a primal-dual solution pair of \eqref{equ: PDHG problem} and \eqref{equ: PDHG dual problem}.
\end{theorem}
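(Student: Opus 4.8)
The plan is to combine the Lyapunov descent and lower-boundedness from Theorem \ref{thm: Lyapunov} to produce vanishing successive differences, then to propagate the coercivity-based boundedness of $\{z^k\}$ through the transform identities of Proposition \ref{thm: Inexact PLADMM} to the remaining iterates, and finally to pass to the limit in the optimality conditions along a convergent subsequence. First I would telescope \eqref{equ: descent}: since $L^k$ is nonincreasing and bounded below by \eqref{equ: lower bounded}, it converges, and summing \eqref{equ: descent} gives $\sum_k(\|y^k-y^{k+1}\|^2_{C_1}+\|z^k-z^{k+1}\|^2_{C_2})<\infty$. Because $C_1,C_2\succ 0$, this forces $\|y^k-y^{k+1}\|\to 0$ and $\|z^k-z^{k+1}\|\to 0$. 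Feeding the former into \eqref{equ: bound the dual} gives $\|u^k-u^{k+1}\|\to 0$, and since $x^k=M_1^{-1}u^k$ also $\|x^k-x^{k+1}\|\to 0$; together with $\|\varepsilon^{k+1}\|\le c(p)\|z^{k+1}-z^k\|\to 0$, all successive-difference and error quantities vanish.

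Next comes boundedness. Inequality \eqref{equ: lower bounded 1} together with $C_1\succ 0$ (equivalently $\mu_f>\sqrt{2}\|M_1\|$, which makes the last two terms of \eqref{equ: lower bounded 1} nonnegative because $\|v\|^2_{M_1^{-1}}\ge \frac{1}{\|M_1\|}\|v\|^2$) yields $g^*(z^k)+f^*(-A^Tz^k)\le L^k\le L^0$. By the coercivity in Assumption \ref{Assumption 2}, the corresponding sublevel set is bounded, so $\{z^k\}$ is bounded. I would then propagate this: from the $u$-update \eqref{equ: third optimality condition} we have $y^{k+1}=(u^k-u^{k+1})-A^Tz^{k+1}$, whose first term tends to $0$ and whose second is bounded, so $\{y^k\}$ is bounded; and \eqref{equ: key equality} reads $x^{k+1}=M_1^{-1}u^{k+1}=\nabla f^*(y^{k+1})$, so continuity (indeed $\frac{1}{\mu_f}$-Lipschitzness) of $\nabla f^*$ makes $\{x^k\}$ bounded. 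Hence $(x^k,z^k)$ is bounded.

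Finally, the cluster-point characterization. Along a subsequence with $(x^{k_j},z^{k_j})\to(x^\infty,z^\infty)$, the vanishing successive differences give $x^{k_j+1}\to x^\infty$ and $z^{k_j+1}\to z^\infty$ as well. The exact $x$-update in \eqref{equ: PrePDHG 1} yields $-A^Tz^{k_j}-M_1(x^{k_j+1}-x^{k_j})\in\partial f(x^{k_j+1})$, and the inexact $z$-inclusion \eqref{equ: varepsilon^{k+1}}, rearranged, yields $A(2x^{k_j+1}-x^{k_j})-M_2(z^{k_j+1}-z^{k_j})-\varepsilon^{k_j+1}\in\partial g^*(z^{k_j+1})$. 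Taking $j\to\infty$, the left-hand sides converge to $-A^Tz^\infty$ and $Ax^\infty$ respectively, while the arguments converge to $x^\infty$ and $z^\infty$; since the graphs of $\partial f$ and $\partial g^*$ are sequentially closed (as subdifferentials of proper closed convex functions), I obtain $-A^Tz^\infty\in\partial f(x^\infty)$ and $Ax^\infty\in\partial g^*(z^\infty)$, which are exactly the optimality conditions of Lemma \ref{lem: choose preconditioners}. Hence $(x^\infty,z^\infty)$ is a primal-dual solution pair.

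The step I expect to be the main obstacle is the boundedness of $\{x^k\}$ (equivalently $\{u^k\}$): the coercivity hypothesis only controls $\{z^k\}$ directly, so one must carefully exploit the transform identities, first using $y^{k+1}=(u^k-u^{k+1})-A^Tz^{k+1}$ with $\|u^k-u^{k+1}\|\to 0$ to bound $\{y^k\}$, and only then invoking $x^{k+1}=\nabla f^*(y^{k+1})$ to bound $\{x^k\}$. Care is also needed to confirm that $C_1\succ 0$ renders the tail terms of \eqref{equ: lower bounded 1} nonnegative and to justify graph-closedness of the subdifferentials when passing to the limit.
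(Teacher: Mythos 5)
Your proof is correct, and its skeleton matches the paper's: telescope \eqref{equ: descent} against the lower bound \eqref{equ: lower bounded} to get vanishing successive differences, use coercivity through \eqref{equ: lower bounded 1} to bound $\{z^k\}$, propagate boundedness to the remaining iterates via the transforms of Proposition \ref{thm: Inexact PLADMM}, and pass to the limit in the optimality conditions. You diverge in two places. For the boundedness of $\{y^k\}$, the paper reads boundedness of $\{A^Tz^k+y^k\}$ directly off the nonnegative tail of \eqref{equ: lower bounded 1} (the same coefficient $\tfrac{1}{2\|M_1\|}-\tfrac{1}{2\mu_f}>0$ that you use to discard that tail), whereas you use $u^k-u^{k+1}=A^Tz^{k+1}+y^{k+1}\to\vz$; both work. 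More substantively, for the cluster-point characterization the paper stays in the $(z,y,u)$ coordinates: it exhibits an explicit subgradient $d^{k+1}\in\partial L(z^{k+1},y^{k+1},u^{k+1})$ of the augmented Lagrangian \eqref{equ: generalized augmented Lagrangian}, shows $d^k\to\vz$, and invokes outer semicontinuity of the limiting subdifferential (Rockafellar--Wets) to conclude $\vz\in\partial L(z^c,y^c,u^c)$ before unpacking this into the primal-dual conditions. You instead pass to the limit directly in the two monotone inclusions of PrePDHG in the $(x,z)$ variables, using only graph-closedness of $\partial f$ and $\partial g^*$. Your route is somewhat more elementary and lands directly on the characterization in Lemma \ref{lem: choose preconditioners}; the paper's route has the side benefit that the bound $d^k\to\vz$ is exactly what the KŁ argument of Theorem \ref{thm: sequence convergence} needs, so it is reused there. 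One minor bookkeeping point: \eqref{equ: key equality}, and hence \eqref{equ: bound the dual}, hold only for $k\ge 1$, which affects none of your asymptotic conclusions.
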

\begin{proof}
According to Theorem \ref{thm: Inexact PLADMM}, it is sufficient to show that $\{M_1^{-1}u^k, z^k\}$ is bounded, and its cluster points are primal-dual solution pairs of \eqref{equ: PDHG problem}.

Since $L^k$ is nonincreasing, \eqref{equ: lower bounded 1} tells us that
\[
g^*(z^k)+f^*(-A^Tz^k)+\frac{1}{2}\|A^Tz^k+y^k\|^2_{M_1^{-1}}\leq L^0< +\infty.
\]
Since $g^*(z)+f^*(-A^Tz)$ is coercive, $\{z^k\}$ is bounded, and, by the boundedness of $\{A^Tz^k+y^k\}$, $\{y^k\}$ is also bounded. Furthermore, \eqref{equ: key equality} gives us
\[
\|M_1^{-1}(u^k-u^0)\|\leq \frac{1}{\mu_f}\|y^k-y^0\|.
\]
Therefore, $\{M_1^{-1}u^k\}$ is bounded, too.

Let $(z^c, y^c, u^c)$ be a cluster point of $\{z^k, y^k, u^k\}$. We shall show $(z^c, y^c, u^c)$ is a saddle point of $L(z, y, u)$, i.e.,
\begin{align}
\label{equ: saddle point}
\vz\in \partial L(z^c, y^c, u^c),
\end{align}
or equivalently,
\begin{align*}
    \vz &\in \partial g^*(z^c)-AM_1^{-1}u^c,\\
    \vz &= \nabla f^*(y^c)-M_1^{-1}u^c,\\
    \vz &= A^Tz^c+y^c,
\end{align*}
which ensures $(M_1^{-1}u^c, z^c)$ to be a primal-dual solution pair of \eqref{equ: PDHG problem}.

In order to show \eqref{equ: saddle point}, we first notice that \eqref{equ: generalized augmented Lagrangian} gives
\begin{align*}
    \partial_x L(z^{k+1}, y^{k+1}, u^{k+1})&=\partial g^*(z^{k+1})-AM_1^{-1}u^{k+1}+AM_1^{-1}(A^Tz^{k+1}+y^{k+1}),\\
    \nabla_y L(z^{k+1}, y^{k+1}, u^{k+1})&=\nabla f^*(y^{k+1})-M_1^{-1}u^{k+1}+M_1^{-1}(A^Tz^{k+1}+y^{k+1}),\\
    \nabla_u L(z^{k+1}, y^{k+1}, u^{k+1})&=M_1^{-1}(-A^Tz^{k+1}-y^{k+1}).
\end{align*}
Comparing these with the optimality conditions \eqref{equ: first optimality condition}, \eqref{equ: key equality}, and \eqref{equ: third optimality condition}, we have
\[
d^{k+1}=(d^{k+1}_z, d^{k+1}_y, d^{k+1}_u)\in\partial L(z^{k+1}, y^{k+1}, u^{k+1}),
\]
where
\begin{align*}
    d^{k+1}_z&=M_2(z^k-z^{k+1})+2AM_1^{-1}(u^k-u^{k+1})-AM_1^{-1}(u^{k-1}-u^k)-\varepsilon^{k+1},\\
    d^{k+1}_y&=M_1^{-1}(u^k-u^{k+1}),\\
    d^{k+1}_u&=M_1^{-1}(u^{k+1}-u^k).
\end{align*}
Since \eqref{equ: descent} and \eqref{equ: lower bounded} imply $z^k-z^{k+1}, y^k-y^{k+1}\rightarrow \vz$,  \eqref{equ: key equality} gives $u^k-u^{k+1}\rightarrow \vz$. Combine these with \eqref{equ: bounded}, we have $d^k\rightarrow \vz$.

Finally, let us take a subsequence $\{z^{k_s}, y^{k_s}, u^{k_s}\}\rightarrow (z^c, y^c, u^c)$. Since $d^{k_s}\rightarrow \vz$ as $s\rightarrow +\infty$, \cite[Def. 8.3]{rockafellar2009variational} and \cite[Prop. 8.12]{rockafellar2009variational} yield \eqref{equ: saddle point}, which tells us that $(M_1^{-1}u^c, z^c)$ is a primal-dual solution pair of \eqref{equ: PDHG problem}.
\end{proof}

We can show that the whole sequence $(x^k, z^k)$ in Algorithm \ref{alg: Inexact PrePDHG}  converges. Since the proof consists of a standard technique of using the KŁ property in Assumption \ref{Assumption 2}, which is not  relevant to the main idea of this subsection, we leave it to Appendix \ref{App: KL}.
\begin{theorem}
\label{thm: sequence convergence}
Let the assumptions in Theorem \ref{thm: Lyapunov} hold. Then,  $\{x^k, z^k\}$ in Algorithm \ref{alg: Inexact PrePDHG} converges to a primal-dual solution pair of \eqref{equ: PDHG problem}.
\end{theorem}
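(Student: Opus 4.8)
The plan is to run the by-now standard Kurdyka–Łojasiewicz (KŁ) convergence scheme for nonconvex descent methods (in the spirit of Attouch–Bolte–Svaiter and Bolte–Sabach–Teboulle) on the Lyapunov function $L(z,y,u)$ of \eqref{equ: generalized augmented Lagrangian} and the sequence $w^k\coloneqq(z^k,y^k,u^k)$ produced by Algorithm \ref{alg: Inexact PLADMM}, then translate the conclusion back to $(x^k,z^k)$ via Proposition \ref{thm: Inexact PLADMM}. That scheme needs three ingredients: a sufficient-decrease inequality, a relative-error (subgradient) bound, and the KŁ property of $L$ together with convergence of $L$ along cluster subsequences. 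The first two are essentially already assembled in the preceding proofs, so the real work is to package them correctly and carry out the summability argument.

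First I would record sufficient decrease in all variables. Theorem \ref{thm: Lyapunov} gives $L^k-L^{k+1}\geq \|y^k-y^{k+1}\|_{C_1}^2+\|z^k-z^{k+1}\|_{C_2}^2$ with $C_1,C_2\succ 0$; combining this with $\|u^k-u^{k+1}\|\leq \tfrac{\|M_1\|}{\mu_f}\|y^k-y^{k+1}\|$, which follows from \eqref{equ: key equality}, upgrades it to $L^k-L^{k+1}\geq a\|w^k-w^{k+1}\|^2$ for some $a>0$. Next I would record the relative-error bound. The proof of Theorem \ref{thm: subsequence convergence} already exhibits $d^{k+1}\in\partial L(z^{k+1},y^{k+1},u^{k+1})$ whose blocks are linear combinations of $z^k-z^{k+1}$, $u^k-u^{k+1}$, $u^{k-1}-u^k$, and $\varepsilon^{k+1}$; using \eqref{equ: bounded} and \eqref{equ: key equality} this yields $\|d^{k+1}\|\leq b\big(\|w^{k+1}-w^k\|+\|w^k-w^{k-1}\|\big)$ for some $b>0$. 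The two-step dependence is benign: one accommodates the extra $\|w^k-w^{k-1}\|$ term inside the usual telescoping (equivalently, one works with a shifted merit quantity).

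Then I would verify the KŁ hypothesis and the requisite continuity of $L$ along the iterates. Since $f$ is $\mu_f$-strongly convex, $f^*$ is $C^1$ with Lipschitz gradient, so $L$ differs from the KŁ function $g^*(z)$ (Assumption \ref{Assumption 2}(3)) only by the smooth terms $f^*(y)+\langle -A^Tz-y,M_1^{-1}u\rangle+\tfrac12\|A^Tz+y\|_{M_1^{-1}}^2$; hence $L$ is a KŁ function. By Theorem \ref{thm: subsequence convergence} the sequence $\{w^k\}$ is bounded, and since $L^k$ is nonincreasing and bounded below it converges to some $L^\star$. On the nonempty compact cluster set $\Omega$ one then has $L\equiv L^\star$, once one checks $L(w^{k_s})\to L(w^c)$ along convergent subsequences; this reduces, via continuity of the smooth part, to $g^*(z^{k_s})\to g^*(z^c)$, which follows from lower semicontinuity of $g^*$ together with the subgradient inequality extracted from \eqref{equ: first optimality condition}.

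Finally comes the summability step: the uniformized KŁ inequality on $\Omega$ gives, for large $k$, $\varphi'(L^k-L^\star)\,\mathrm{dist}(0,\partial L(w^k))\geq 1$; combining this with the decrease and error bounds above and with the concavity of $\varphi$ produces a recursion that telescopes to $\sum_k\|w^{k+1}-w^k\|<\infty$. Hence $\{w^k\}$ is Cauchy and converges to a single limit $w^\infty=(z^\infty,y^\infty,u^\infty)$, which by Theorem \ref{thm: subsequence convergence} corresponds to a primal-dual solution pair; since $x^k=M_1^{-1}u^k$ by Proposition \ref{thm: Inexact PLADMM}, convergence of $u^k$ and $z^k$ gives convergence of $(x^k,z^k)$ to a primal-dual solution pair of \eqref{equ: PDHG problem} and \eqref{equ: PDHG dual problem}. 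I expect the main obstacle to be the two places where the generic template must be adapted: establishing the KŁ property (and the along-iterates continuity) of the composite $L$ from the KŁ assumption on $g^*$ alone, leaning on the smoothness of $f^*$, and pushing the extra $\|w^k-w^{k-1}\|$ term from the two-step relative-error bound through the telescoping estimate without destroying summability.
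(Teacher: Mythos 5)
Your proposal follows essentially the same route as the paper: the paper's Appendix C establishes convergence of $L$ along cluster subsequences exactly as you do (lower semicontinuity of $L$, isolating $g^*$ as the only discontinuous term, and the subgradient inequality from \eqref{equ: first optimality condition} to get $\limsup g^*(z^{k_s})\leq g^*(z^c)$), notes $L$ is KŁ since $g^*$ is, and then invokes the standard Attouch--Bolte--Svaiter machinery by citation. Your write-up actually spells out more of the template (the full-variable sufficient decrease, the two-step relative-error bound from $d^{k+1}$, and the telescoping/summability step) than the paper does, and your handling of the extra $\|w^k-w^{k-1}\|$ term is the correct standard fix.
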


\begin{proof}
See Appendix \ref{App: KL}.
\end{proof}

\section{Numerical experiments}
\label{Section: Numerical experiments}

In this section, we compare our iPrePDHG (Algorithm \ref{alg: Inexact PrePDHG}) with (original) PDHG \eqref{equ: PDHG} and diagonally-preconditioned PDHG (DP-PDHG) \cite{pock2011diagonal}. We consider four popular applications of PDHG: TV-$\text{L}^1$ denoising, graph cuts, estimation of earth mover's distance, and CT reconstruction. 

For the preconditioners $M_1$ and $M_2$ in iPrePDHG, We will choose $M_1=\frac{1}{\tau}I_{n\times n}$ and $M_2 = \tau AA^T$, which corresponds to ADMM and $M_2$ is optimal (see subsection \ref{Subsection: Convergence of PrePDHG and Choice of Preconditioners}). The number of inner loops $p$ is taken from $\{1,2,3\}$. Although these choices may be more aggressive than what's presented in Lemma \ref{lem: near ADMM choice} and $f$ may not be strongly convex in our experiments, we still observe significant speedup compared to other algorithms.

When we write these examples in the form of \eqref{equ: PDHG problem}, the matrix $A$ (or a part of $A$) is one of the following operators:

\begin{enumerate}[label=\textbf{\ Case \arabic*:},itemindent=*,listparindent=2em]

     \item  2D discrete gradient operator $D: \R^{M\times N}\rightarrow \R^{2M\times N}$:

    \noindent For images of  size $M\times N$ and grid step size  $h$, we have.
\begin{align*}
(D u)_{i,j}=\begin{pmatrix}(D u)^1_{i,j} \\ (D u)_{i,j}^2\end{pmatrix},
\end{align*}
where
\begin{align*}
    (D u)_{i,j}^1&=
   \begin{cases}
  \frac{1}{h}(u_{i+1,j}-u_{i,j}) &\mbox{if $i<M$},\\
   0 &\mbox{if $i=M$},
   \end{cases}
   \\
    (D u)_{i,j}^2&=
   \begin{cases}
  \frac{1}{h}(u_{i,j+1}-u_{i,j}) &\mbox{if $j<N$},\\
   0 &\mbox{if $j=N$}.
   \end{cases}
\end{align*}
 \item  Weighted gradient operator $D_w:\R^{M\times N}\rightarrow \R^{ 2M\times N}$:
\[
D_w = \text{diag}(w)D,
\]
where $w\in(\R^+)^{2MN}$ is a weight vector.
 \item  2D discrete divergence operator: div$:\R^{2M\times N}\rightarrow \R^{ M\times N}$ given by
\begin{align}\label{div expression}
\text{div}(p)_{i,j} = h(p_{i,j}^1 - p_{i-1,j}^1+p_{i,j}^2-p_{i,j-1}^2),
\end{align}
where $p=(p^1, p^2)^T\in\R^{2M\times N}$, $p^1_{0,j} = p^1_{M,j} = 0$ and $p^2_{i,0} = p^2_{i,N} = 0$ for $i = 1,...,M$, $j = 1,...,N$.

\end{enumerate}

To take advantages of the finite-difference structure of these operators, we let $S$ be the iterator of cyclic proximal BCD in Algorithm \ref{alg: Inexact PrePDHG}. We split $\{1,2,...m\}$ into $2$ blocks (for case 3) or $4$ blocks (for cases 1 and 2), 
which are inspired by the popular red-black ordering \cite{saad2003iterative} for solving sparse linear system.

According to Theorem \ref{thm: BCD finite inner loops}, running finitely many epochs of cyclic proximal BCD gives us a bounded relative error in Def. \ref{def: bounded relative error}. We expect that this solver brings faster overall convergence. Specifically, when $g^*$ is linear (or equivalently, $g$ is a $\delta$ function), the $z$-subproblem in PrePDHG reduces to a linear system with a structured sparse matrix $AA^T$. Therefore, Gradient Descent amounts to the Richardson method \cite{richardson1911ix, saad2003iterative}, and cyclic proximal BCD is equivalent to the Gauss-Seidel method \cite{gauss1903werke, saad2003iterative}. 
The following two claims tell us that $S$ in Algorithm \ref{alg: Inexact PrePDHG} has a closed form, so Algorithm \ref{alg: Inexact PrePDHG} is easy to implement. Furthermore, each execution of $S$ can use parallel computing.
\begin{figure}[htbp]
\centering
\begin{minipage}[t]{0.48\textwidth}
\centering
\includegraphics[width=6cm]{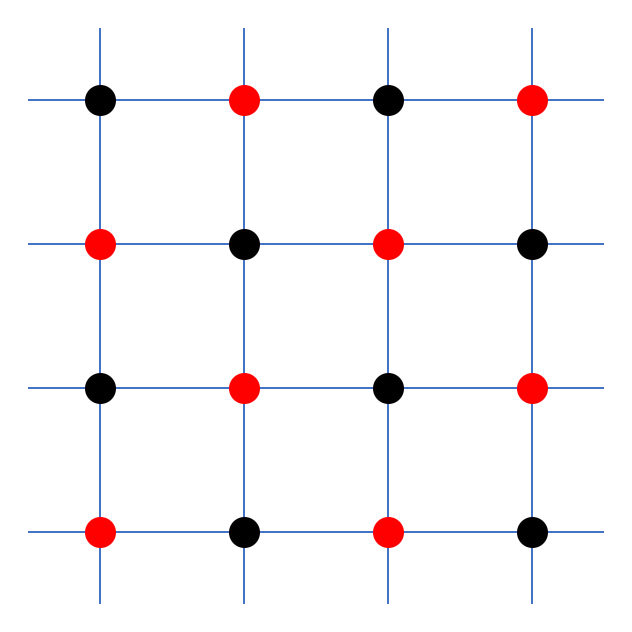}
\caption{two-block ordering in Claim \ref{claim1}}
\label{2block}
\end{minipage}
\begin{minipage}[t]{0.48\textwidth}
\centering
\includegraphics[width=6cm]{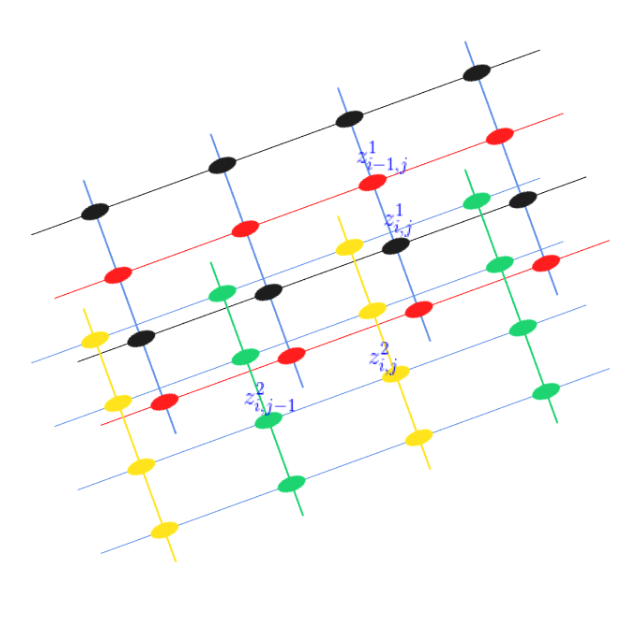}
\caption{four-block ordering in Claim \ref{claim2}}
\label{4block}
\end{minipage}
\end{figure}

\begin{claim}
\label{claim1}
When $A=\rm div$ (i.e. $A^T=-D$) and $M_2=\tau AA^T$, for $z\in {\mathbb{R}}^{M\times N}$, we separate $z$ into two block $z_b$, $z_r$ where
\[
z_b:=\{ z_{i,j}\,|\,i+j\text{ is even}\}, ~ z_r:=\{ z_{i,j}\,|\,i+j\text{ is odd}\},\]
for $1\leq i\leq M$, $1\leq j\leq N$. If $g(z)=\Sigma_{i,j}g_{i,j}(z_{i,j})$ and $prox_{\lambda g^*_{i,j}}$ have closed-form solutions for all $1\leq i\leq M$, $1\leq j\leq N$ and $\lambda > 0$, then $S$ as the iterator of cyclic proximal BCD in Algorithm \ref{alg: Inexact PrePDHG} has a closed form and computing $S$ is parallelizable.
\end{claim}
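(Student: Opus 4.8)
The plan is to exploit the red--black (checkerboard) decoupling of the discrete Laplacian $AA^T$. First I would record that separability of $g$ passes to the conjugate: since $g(z)=\sum_{i,j}g_{i,j}(z_{i,j})$, we have $g^*(z)=\sum_{i,j}g^*_{i,j}(z_{i,j})$, so in the $z$-subproblem \eqref{equ: z subproblem} the nonsmooth part $h_1=g^*$ is fully coordinate-separable, while the smooth part $h_2(z)=\frac{1}{2}\|z-\tilde z\|^2_{M_2}$, with $\tilde z = z^k+M_2^{-1}A(2x^{k+1}-x^k)$, is quadratic with gradient $\nabla h_2(z)=M_2(z-\tilde z)$. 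Recall that one epoch of the iterator $S$ updates the black block $z_b$ and then the red block $z_r$, each by a single proximal-gradient step of the form $z_b^{\mathrm{new}}=\prox_{\gamma h_{1,b}}(z_b-\gamma\nabla_b h_2(z))$.

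The key structural step is to write out $M_2=\tau AA^T = -\tau\,\mathrm{div}\,D$ explicitly. Using the stencil \eqref{div expression} and the definition of $D$, its action on a scalar field is the standard five-point discrete Laplacian,
\[
[AA^T z]_{i,j}=\deg(i,j)\,z_{i,j}-\!\!\sum_{(i',j')\sim(i,j)}\!\! z_{i',j'},
\]
where the neighbors $(i',j')\sim(i,j)$ are $(i\pm1,j)$ and $(i,j\pm1)$ intersected with the grid, and $\deg(i,j)$ counts them. The decisive observation is that every such neighbor has parity of $i'+j'$ opposite to that of $i+j$; hence in the black/red ordering the Laplacian takes the block form
\[
AA^T=\begin{pmatrix}\Lambda_b & B\\ B^\top & \Lambda_r\end{pmatrix},
\]
whose two diagonal blocks $\Lambda_b,\Lambda_r$ are themselves diagonal matrices, and the same holds for $M_2$ after scaling by $\tau$.

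Given this structure, the block gradient at a black point decouples: $[\nabla_b h_2(z)]_{i,j}=\tau[AA^T(z-\tilde z)]_{i,j}$ involves $z_{i,j}$ only through the diagonal of $\Lambda_b$, while all remaining dependence runs through $B$, i.e.\ through the red neighbors, which are held fixed during the black sweep. Thus the proximal-gradient step splits into independent scalar updates $z_{i,j}^{\mathrm{new}}=\prox_{\gamma g^*_{i,j}}\big(z_{i,j}-\gamma[\nabla_b h_2(z)]_{i,j}\big)$, each in closed form by the standing assumption on $\prox_{\lambda g^*_{i,j}}$; the red sweep is identical with the (now updated) black values frozen. Closed form of $S$ follows from separability together with the assumed scalar proxes, and parallelizability follows because within each color the updates are mutually independent.

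I expect the only genuine work to be the explicit stencil computation of $AA^T$ and the check that boundary points, where $\deg(i,j)<4$, do not spoil the decoupling. This is precisely where care is needed, but the finite-difference structure guarantees that the boundary conventions $p^1_{0,j}=p^1_{M,j}=0$, $p^2_{i,0}=p^2_{i,N}=0$ only lower diagonal entries and delete off-diagonal links; they never create a coupling between two same-color points, so the block form above is preserved and the argument goes through unchanged.
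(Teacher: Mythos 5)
Your proposal is correct and follows essentially the same route as the paper's Appendix D argument: the paper splits the columns of $A^T=-D$ into $L_b,L_r$ and observes that $L_b^TL_b$ and $L_r^TL_r$ are diagonal (columns within a color are mutually orthogonal), which is exactly your observation that the five-point stencil of $AA^T$ only couples opposite-parity nodes, so the same-color diagonal blocks of $M_2$ are diagonal and each color sweep reduces to independent scalar proxes of $g^*_{i,j}$. Your explicit check that the Neumann boundary conventions only remove links and never couple same-color nodes is a nice touch the paper leaves implicit.
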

 \begin{proof}
As illustrated in Fig. \ref{2block}, every black node is connected to its neighbor red nodes, so we can update all the coordinates corresponding to the black nodes in parallel, while those corresponding to the red nodes are fixed, and vice versa. See Appendix \ref{App: Smart Ordering} for a complete explanation.
 \end{proof}
\begin{claim}
\label{claim2}
   When $A= D$ or $A=D_w$ (i.e. $A^T=-\rm div$ or $A^T=-\rm div\, diag(w)$) and $M_2=\tau AA^T$, for $z=(z^1, z^2)^T \in {\mathbb{R}}^{2M\times N}$, we separate $z$ into four blocks $z_b$, $z_r$, $z_y$ and $z_g$, where
  \begin{align*}
  z_b&=\{ z^1_{i,j}\,|\,\text{$i$ is odd}\},
  \quad z_r=\{z^1_{i,j}\,|\,\text{$i$ is even}\},\nonumber\\
  z_y&=\{z^2_{i,j}\,|\,\text{$j$ is odd}\},
  \quad z_g=\{z^2_{i,j}\,|\,\text{$j$ is even}\},\nonumber
  \end{align*}
  for $1\leq i\leq M$, $1\leq j\leq N$. If $g(z)=\Sigma_{i,j}g_{i,j}(z_{i,j})$ and $prox_{\lambda g^*_{i,j}}$ have closed-form solutions for all $1\leq i\leq M$, $1\leq j\leq N$ and $\lambda > 0$, then $S$ as the iterator of cyclic proximal BCD in Algorithm \ref{alg: Inexact PrePDHG} has a closed form and computing $S$ is parallelizable. 
\end{claim}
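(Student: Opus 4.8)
The plan is to follow the blueprint of Claim \ref{claim1}: I will verify that each of the four color classes $z_b,z_r,z_y,z_g$ is an \emph{independent set} for the coupling pattern of $M_2=\tau AA^T$, so that a single block update of cyclic proximal BCD decouples into independent scalar proximal maps. First I would recall, as in \eqref{equ: z subproblem}, that the $z$-subproblem is the minimization of $h_1(z)+h_2(z)$ with $h_1(z)=g^*(z)$ and $h_2(z)=\frac12\|z-z^k-M_2^{-1}A(2x^{k+1}-x^k)\|_{M_2}^2$. Since $g=\sum_{i,j}g_{i,j}$ is coordinatewise separable, so is $g^*$, and the only intra-block coupling in a BCD step enters through $\nabla h_2$, i.e.\ through the diagonal sub-block of $M_2$ restricted to the color class being updated.

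The crux is then to read off the stencil of $M_2=\tau AA^T$. With $A=D$ having components $D^1$ (finite difference in $i$) and $D^2$ (finite difference in $j$), the matrix $AA^T$ has a $2\times 2$ block form whose diagonal blocks are $D^1(D^1)^T$ and $D^2(D^2)^T$ and whose off-diagonal blocks are $D^1(D^2)^T$ and $D^2(D^1)^T$. The structural facts I would establish are: (i) $D^1(D^1)^T$ couples $z^1_{i,j}$ only with $z^1_{i\pm1,j}$, so it is tridiagonal in $i$; (ii) symmetrically $D^2(D^2)^T$ is tridiagonal in $j$; and (iii) the off-diagonal blocks couple a $z^1$ coordinate only with $z^2$ coordinates, never two entries of the same component. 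Splitting $z^1$ by the parity of $i$ then makes the restriction of $D^1(D^1)^T$ to each of $z_b,z_r$ diagonal, since two indices of equal parity are never adjacent; splitting $z^2$ by the parity of $j$ does the same for $z_y,z_g$; and by (iii) the cross-component terms are inert while a single color class is updated. The identical computation covers $A=D_w=\mathrm{diag}(w)D$, since the diagonal weights only rescale entries of the same stencil.

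Consequently, when $S$ updates a color class, the restriction of $M_2$ to that class is diagonal, so the proximal-gradient step separates completely over coordinates: each $(i,j)$ is updated by a single scalar evaluation of $\prox_{\lambda g^*_{i,j}}$ for the appropriate $\lambda>0$, which is closed-form by hypothesis. This yields the closed form of $S$, and because the coordinates inside a block are mutually decoupled they may be updated simultaneously, giving parallelizability. I expect the only delicate point to be the precise verification of (i)--(iii)---in particular confirming, with the zero boundary conditions built into $D$, that no two same-color coordinates ever share a row of $AA^T$; this is exactly the combinatorial content of the four-color pattern in Figure \ref{4block}, and I would defer its bookkeeping to Appendix \ref{App: Smart Ordering}.
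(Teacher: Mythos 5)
Your proposal is correct and follows essentially the same route as the paper: the paper's Appendix argument shows that the columns of $A^T$ associated with each color class are mutually orthogonal, so each Gram matrix $L_c^TL_c$ (equivalently, the restriction of $M_2=\tau AA^T$ to that class, which is what you compute via the stencil of $D^1(D^1)^T$, $D^2(D^2)^T$ and the cross blocks) is diagonal, whence the block update separates into the scalar maps $\prox_{\lambda g^*_{i,j}}$ and is parallelizable. Your verification of facts (i)--(iii) is just a more explicit rendering of the same four-color orthogonality bookkeeping the paper defers to Figure~\ref{4block}.
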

 \begin{proof}
 In Figure \ref{4block}, the 4 blocks are in 4 different colors. The coordinates corresponding to nodes of the same color can be updated in parallel, while the rest are fixed. See Appendix \ref{App: Smart Ordering} for details.
 \end{proof}

In Table \ref{Table: TV-L1}, Table \ref{Table: seg}, Fig. \ref{figure-EMDtime}, and Table \ref{Table: CT}, PDHG denotes original PDHG in \eqref{equ: PDHG} without any preconditioning; DP-PDHG denotes the diagonally-preconditioned PDHG in \cite{pock2011diagonal}, PrePDHG denotes Preconditioned PDHG in \eqref{equ: PrePDHG} where the $(k+1)$th $z$-subproblem is solved until $\frac{\|z^k-z^{k+1}\|_2}{\max\{1,\|z^{k+1}\|_2\}}< 10^{-5}$ using the TFOCS  \cite{becker2011templates} implementation of FISTA with restart; iPrePDHG (S=BCD) and iPrePDHG (S=FISTA) denote our iPrePDHG in Algorithm \ref{alg: Inexact PrePDHG} with the iterator $S$ being
cyclic proximal BCD or FISTA with restart, respectly. 
All the experiments were performed on MATLAB R2018a on a MacBook Pro with a 2.5 GHz Intel i7 processor and 16GB of 2133MHz LPDDR3 memory.

A comparison between PDHG and DP-PDHG is presented in \cite{pock2011diagonal} on TV-$\text{L}^1$ denoising and graph cuts, and in \cite{sidky2012convex} on CT reconstruction.  A PDHG algorithm  is proposed to estimate earth mover's distance (or optimal transport) in \cite{li2017parallel}. In order to provide a direct comparison, we use their problem formulations.

\subsection{Total variation based image denoising}\label{subsec-tvl1}
The following problem is known as the (discrete) TV-$\text{L}^1$ model for image denoising:
\begin{equation*}
\begin{array}{ll}
\mathop{\mathrm{minimize}}_u & \Phi(u) = \|D u\|_{1}+\lambda \|u-b\|_{1},\\
\end{array}
\end{equation*}
where $D$ is the 2D discrete gradient operator with $h=1$, $b\in\R^{M\times N}$ is a noisy input image, and $\lambda$ is a regularization parameter. In our experiment we input a $1024\times 1024$ image with noise level 0.15 and set $\lambda=1$; see Fig. \ref{Noisy image}. We run the algorithms until $\delta^k\coloneqq\frac{|\Phi^k-\Phi^{\star}|}{|\Phi^{\star}|}< 10^{-6}$, where $\Phi^k$ is the objective value at $k$th iteration and $\Phi^*$ is the optimal objective value obtained by calling CVX \cite{cvx, gb08}.

Observed performance is summarized in Table \ref{Table: TV-L1}, where the best results for $\tau \in \{10,1,0.1,0.01,0.001\}$ { and $p\in\{1,2,3\}$} are presented.  Our iPrePDHG (S=BCD) is significantly faster than the other three algorithms.

Remarkably, our algorithm uses fewer outer iterations than PrePDHG under the stopping criterion  $\frac{\|z^k-z^{k+1}\|_2}{\max\{1,\|z^{k+1}\|_2\}}< 10^{-5}$, as this kind of stopping criteria
may become looser as $z^k$ is closer to $z^{\star}$. In this example, $\frac{\|z^k-z^{k+1}\|_2}{\max\{1,\|z^{k+1}\|_2\}}< 10^{-5}$ only requires $1$ inner iteration of FISTA  when Outer Iter $\geq368$, while as high as $228$ inner iterations on average during the first $100$ outer iterations. In comparison, our algorithm uses fewer outer iterations while each of them also costs less.

In addition, the diagonal preconditioner given in \cite{pock2011diagonal} appears to help very little when $A=D$. In fact, $M_1=\text{diag}(\Sigma_{i}|A_{i,j}|)$ will be $4I_n$ and $M_2=\text{diag}(\Sigma_{j}|A_{i,j}|)$ will be $2I_m$ if we ignore the Neumann boundary condition. 
Therefore, DP-PDHG performs even worse than PDHG.

\begin{table}[H]
\label{Table: TV-L1}
\begin{center}
\begin{small}
\begin{tabular}{c|c|c|c}

Method & Parameters  & Outer Iter& Runtime(s)\\
\hline\hline
\multirow{1}{*}{PDHG} &$\tau=0.01, M_1=\frac{1}{\tau}I_{n}, M_2 = \tau\|D\|^2I_{m}$  & 2990 & 114.2576\\
\hline
DP-PDHG & $M_1=\text{diag}(\Sigma_i |D_{i,j}|), M_2=\text{diag}(\Sigma_j|D_{i,j}|)$ & 8856& 329.7890\\
\hline
\multirow{1}{*}{PrePDHG} &\multirow{2}{*}{$\tau=0.1, M_1 =\frac{1}{\tau}I_{n}, M_2=\tau D D^T$} & \multirow{2}{*}{963} &  \multirow{2}{*}{5706.2837} \\(ADMM) & & &\\
\hline
\multirow{1}{*}{iPrePDHG} &\multirow{2}{*}{$\tau=0.01, M_1 = \frac{1}{\tau}I_{n}, M_2=\tau D D^T$, $p=1$}& \multirow{2}{*}{\textbf{541}} & \multirow{2}{*}{\textbf{26.2704}}  \\(S=BCD) & & &\\
\hline
\end{tabular}\\
\end{small}
\end{center}
\caption{TV-$L^1$ denoising test. PDHG is original PDHG. DP-PDHG uses diagonal preconditioning. PrePDHG uses non-diagonal preconditioning. iPrePDHG (S=BCD) is our algorithm that uses both non-diagonal preconditioning and an iterator $S$ instead of solving the $z$-subproblem.}

\end{table}

\begin{figure}[htbp]
\centering
\begin{minipage}[t]{0.48\textwidth}
\centering
\includegraphics[width=6cm]{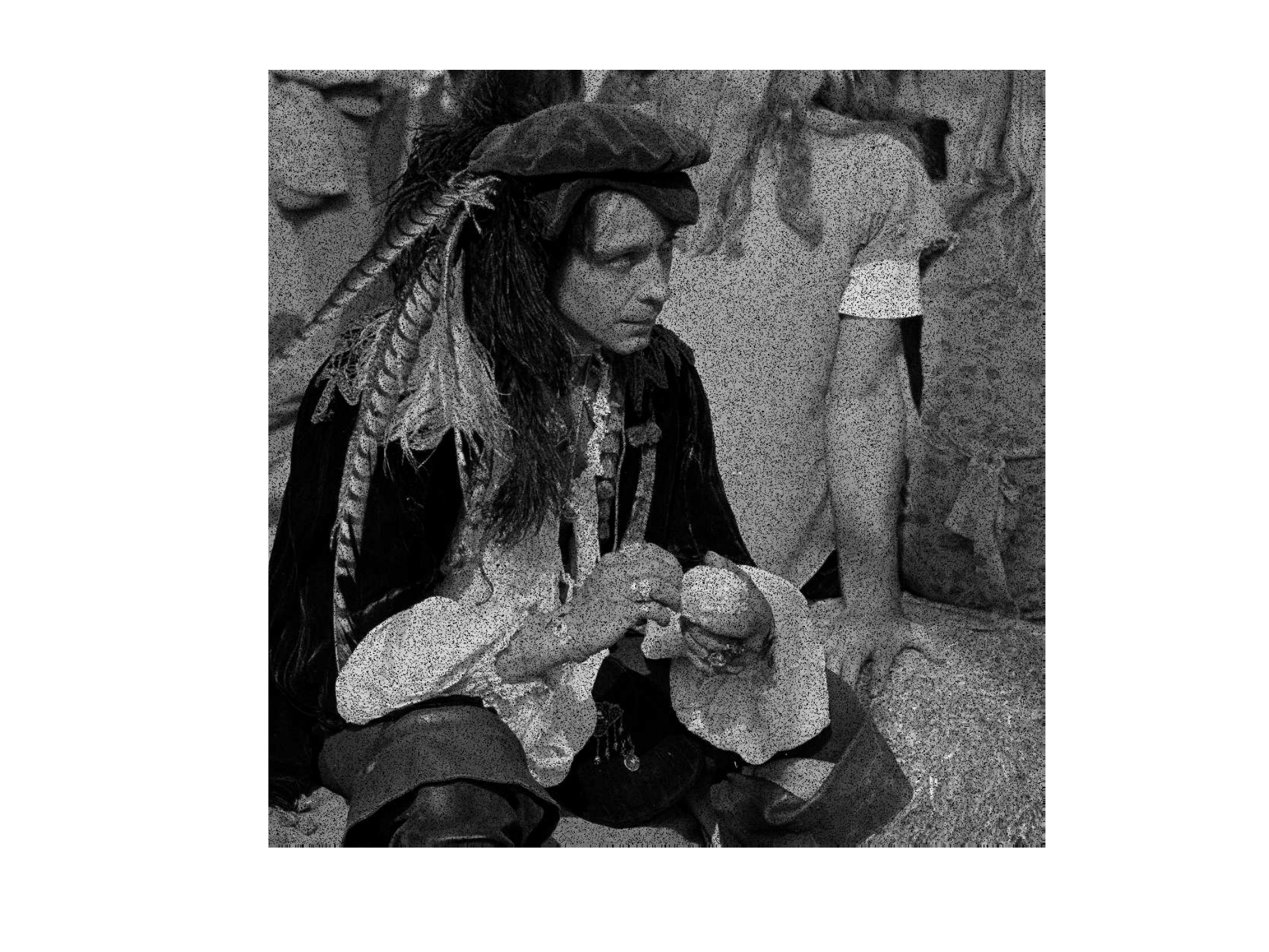}
\caption{Noisy image}
\label{Noisy image}
\end{minipage}
\begin{minipage}[t]{0.48\textwidth}
\centering
\includegraphics[width=6cm]{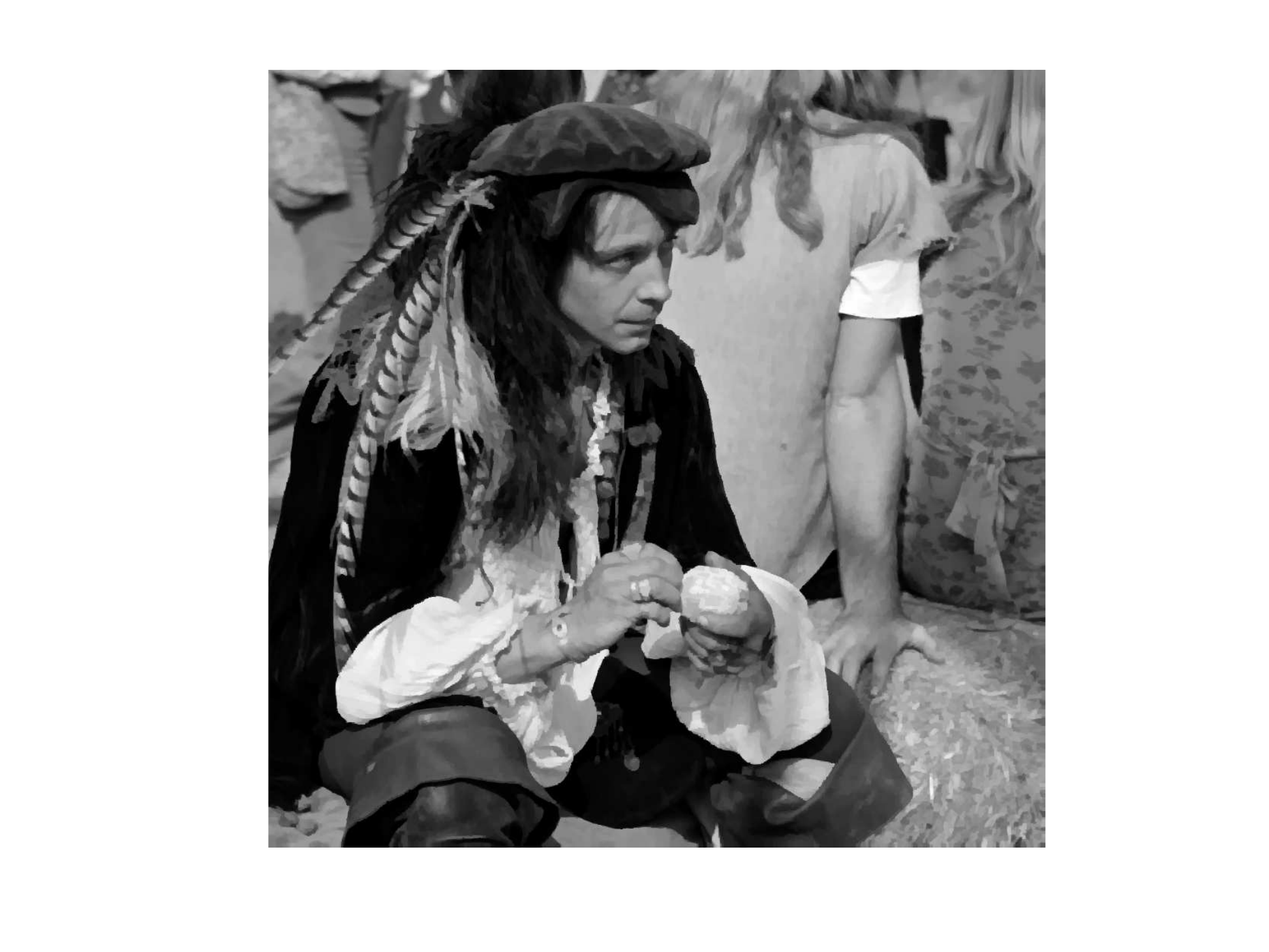}
\caption{Denoising by iPrePDHG (S=BCD)}
\label{Denoised image}
\end{minipage}
\end{figure}

\subsection{Graph cuts}
The total-variation-based graph cut model involves minimizing a weighted TV energy:
\begin{equation*}
\begin{array}{ll}
\mbox{minimize}& \|D_w u\|_{1}+\langle u,\omega^u\rangle\\
\mbox{subject to}& 0\leq u\leq 1, \\
\end{array}
\end{equation*}
where $w^u\in\R^{M\times N}$ is a vector of unary weights, $w^b\in\R^{2MN}$ is a vector of binary weights, and $D_w = \text{diag}(w^b)D$ for $D$ being the 2D discrete gradient operator with $h=1$.  Specifically, we have $w_{i,j}^u=\alpha(\|I_{i,j}-\mu_f\|^2-\|I_{i,j}-\mu_b\|^2)$, $w_{i,j}^{b,1}=\text{exp}(-\beta |I_{i+1,j}-I_{i,j}|)$, and  $w_{i,j}^{b,2}=\text{exp}(-\beta|I_{i,j+1}-I_{i,j}|)$. In our experiment, the image has a size $660\times 720$, and we set  $\alpha=1/2$, $\beta=10$, $\mu_f=[0;0;1]$ (for the blue foreground) and $\mu_b=[0;1;0]$ (for the green background). We run all algorithms until $\delta^k\coloneqq\frac{|\Phi^k-\Phi^{\star}|}{|\Phi^{\star}|}< 10^{-8}$, where $\Phi^k$ is the objective value at the $k$th iteration and $\Phi^*$ is the optimal objective value obtained by running CVX.

The best results of $\tau \in \{10,1,0.1,0.01,0.001\}$  and $p\in\{1,2,3\}$ are summarized in Table \ref{Table: seg}, where we can see that our iPrePDHG (S=BCD) is the fastest. It is also worth mentioning that its number of outer iterations is close to that of PrePDHG, which solves $z$-subproblem much more accurately.

\begin{table}[H]
\label{Table: seg}
\begin{center}
\begin{small}
\begin{tabular}{c|c|c|c}
Method & Parameters  & Outer Iter& Runtime(s)\\
\hline\hline
\multirow{1}{*}{PDHG} &$\tau=1, M_1=\frac{1}{\tau}I_{n}, M_2 = \tau\|D_w\|^2I_{m}$  & 5529 & 140.5777\\
\hline
\multirow{2}{*}{DP-PDHG} & $M_1=\text{diag}(\Sigma_i |{D_w}_{i,j}|),$ & \multirow{2}{*}{3571}& \multirow{2}{*}{104.5392}\\
& $M_2=\text{diag}(\Sigma_j|{D_w}_{i,j}|)$ & &\\
\hline
\multirow{1}{*}{PrePDHG} &\multirow{2}{*}{$\tau=10, M_1 =\frac{1}{\tau}I_{n}, M_2=\tau D_w D_w^T$} & \multirow{2}{*}{282} & \multirow{2}{*}{938.3787} \\(ADMM) & & & \\
\hline
\multirow{1}{*}{iPrePDHG} &\multirow{2}{*}{$\tau=10, M_1 =\frac{1}{\tau}I_{n}, M_2=\tau D_w D_w^T$, $p=2$}& \multirow{2}{*}{\textbf{411}} & \multirow{2}{*}{\textbf{14.9663} } \\ (S=BCD) & & &\\
\hline
\end{tabular}\\
\end{small}
\end{center}
\caption{Graph cut test}
\end{table}

\begin{figure}[htbp]
\centering
\begin{minipage}[t]{0.48\textwidth}
\centering
\includegraphics[width=6cm]{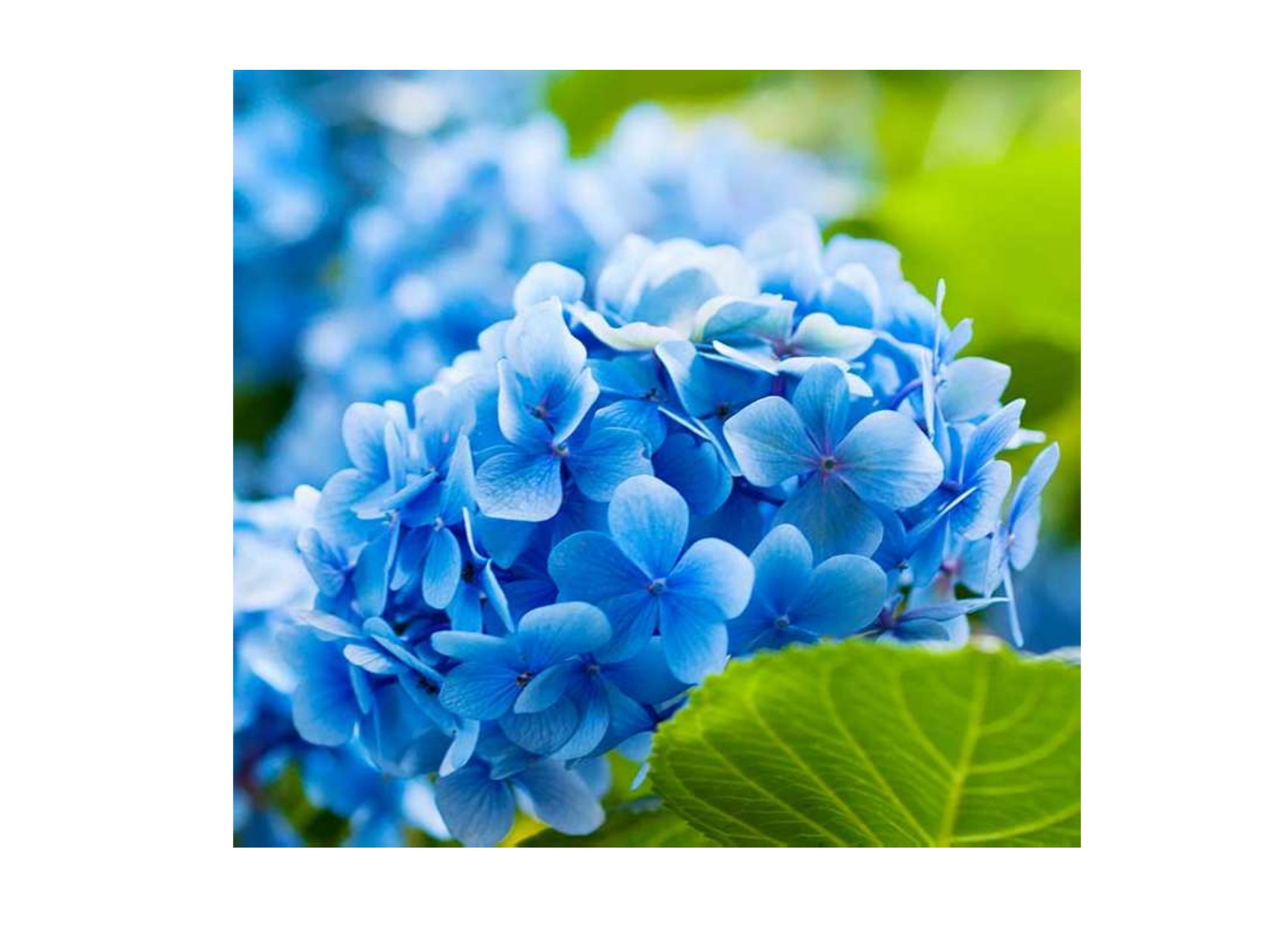}
\caption{Input image}
\label{Input image}
\end{minipage}
\begin{minipage}[t]{0.48\textwidth}
\centering
\includegraphics[width=6cm]{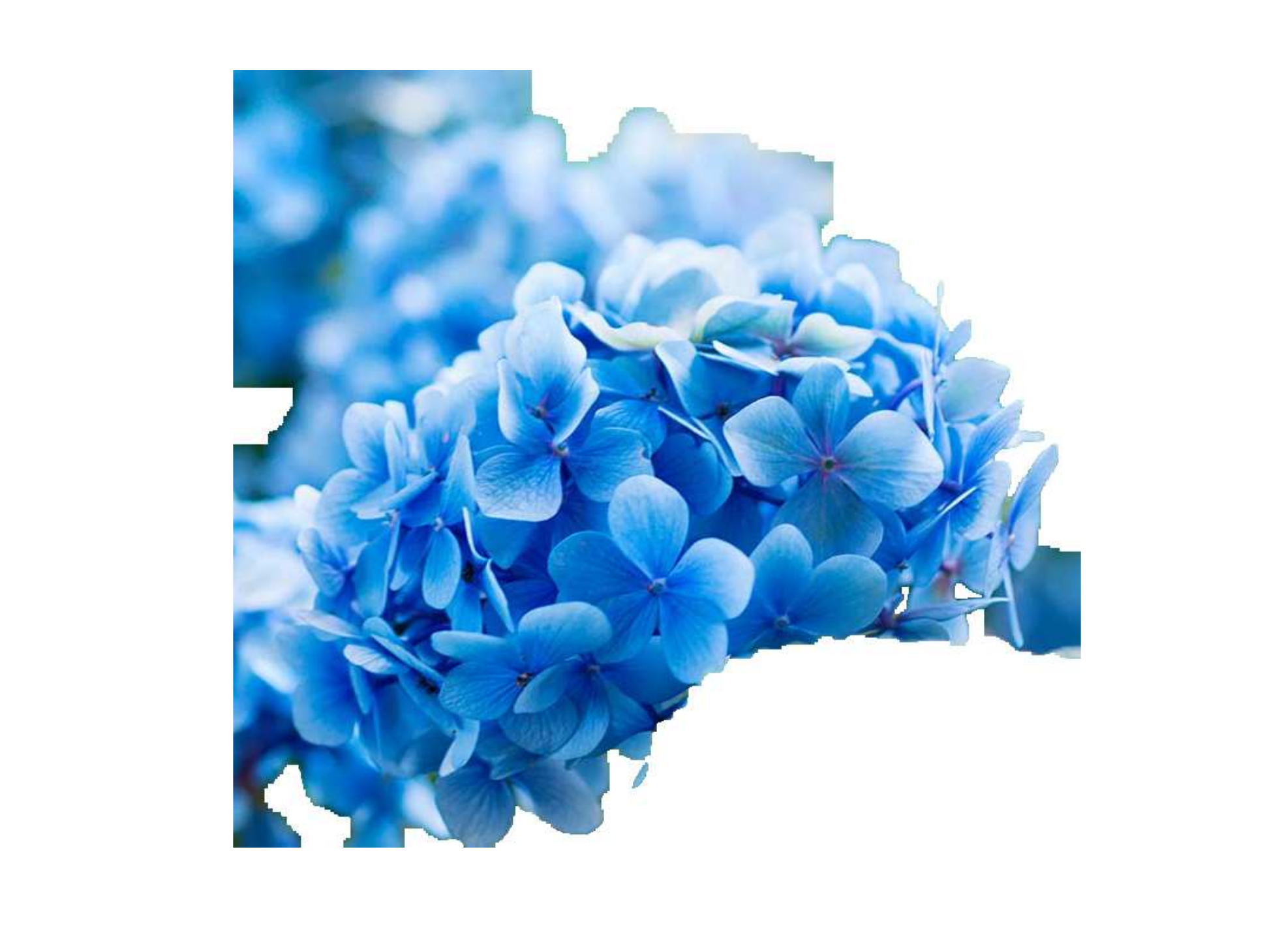}
\caption{Graph cut by iPrePDHG (S=BCD)}
\label{Graph cut}
\end{minipage}
\end{figure}

\subsection{Earth mover's distance}
Earth mover's distance is useful in image processing, computer vision, and statistics \cite{levina2001earth, metivier2016measuring, pele2009fast}. A recent method \cite{li2017parallel} to compute earth mover's distance is based on
\begin{equation}\label{equ: EMD formulation}
\begin{array}{ll}
\mbox{minimize}& \|m\|_{1,2}\\
\mbox{subject to}& \text{div}(m)+\rho^1-\rho^0=0,
\end{array}
\end{equation}
where $m\in\R^{2M\times N}$ is the sought flux vector on the $M\times N$ grid, and $\rho^0, \rho^1$ represents two mass distributions on the $M\times N$ grid. The setting in our experiment here is the same with that in \cite{li2017parallel}, i.e. $M=N=256$, $h=\frac{N-1}{4}$, and for $\rho^0$ and $\rho^1$ see Fig. \ref{figure-cat}.

Since the iterates $m^k$ may not satisfy the linear constraint, the objective $\Phi(m)=I_{\{m|div(m)=\rho^0-\rho^1\}}+\|m\|_{1,2}$ is not comparable. Instead, we compare $\|m^k\|_{1,2}$ and the constraint violation until $k=100000$ outer iterations in Fig. \ref{fig: EMD}, where we set $\tau=3\times 10^{-6}$ as in \cite{li2017parallel}, and $\sigma=\frac{1}{\tau \|\text{div}\|^2}$.
In Fig. \ref{fig: EMD}, we can see that our algorithm provides much lower constraint violation and much more faithful earth mover's distance $\|m\|_{1,2}$.  Fig. \ref{figure-cat} shows the solution obtained by our iPrePDHG (S=BCD), where $m$ is the flux that moves the standing cat $\rho^1$ into the crouching cat $\rho^0$. DP-PDHG and PrePDHG are extremely slow in this example. Similar to \ref{subsec-tvl1}, when $A=\text{div}$, the diagonal preconditioners proposed in \cite{pock2011diagonal} are approximately equivalent to fixed constant parameters $\tau=\frac{1}{2h}$, $\sigma=\frac{1}{4h}$ and they lead to extremely slow convergence. As for PrePDHG, it suffers from the high cost per outer iteration. 

It is worth mentioning that unlike \cite{li2017parallel}, the algorithms in our experiments are not parallelized. On the other hand, in our iPrePDHG (S=BCD), iterator $S$ 
can be parallelized (which we did not implement). Therefore, one can expect a further speedup by a parallel implementation.

 \begin{figure}[htbp!]
 \label{fig: EMD}
 \vskip 0.2in
 \begin{center}
 \centerline{\includegraphics[width=5 in]{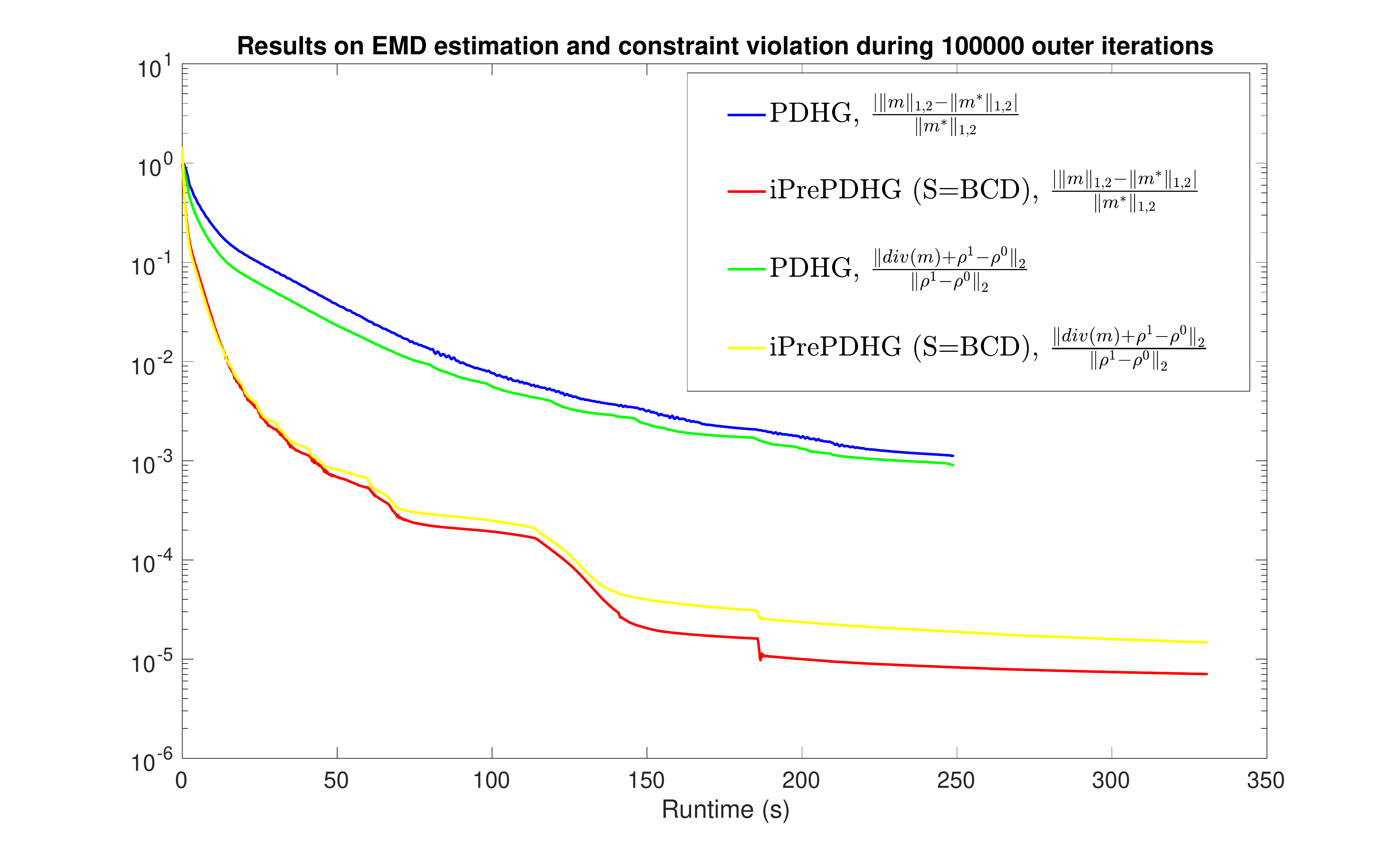}}
 \caption{For PDHG, $\tau=3\times 10^{-6}$, $\sigma=\frac{1}{\tau \|\rm div\|^2}$; For iPrePDHG (S=BCD), $\tau=3\times 10^{-6}$, $M_1=\tau^{-1}I_n$, $M_2=\tau \rm div \rm div^T$, $p=2$. $\|m^*|\|_{1,2}$ is obtained by calling CVX.}
 \label{figure-EMDtime}
 \end{center}
 \vskip -0.2in
 \end{figure}

 \begin{figure}[htbp!]
 \vskip 0.2in \begin{center}
 \centerline{\includegraphics[width=4 in]{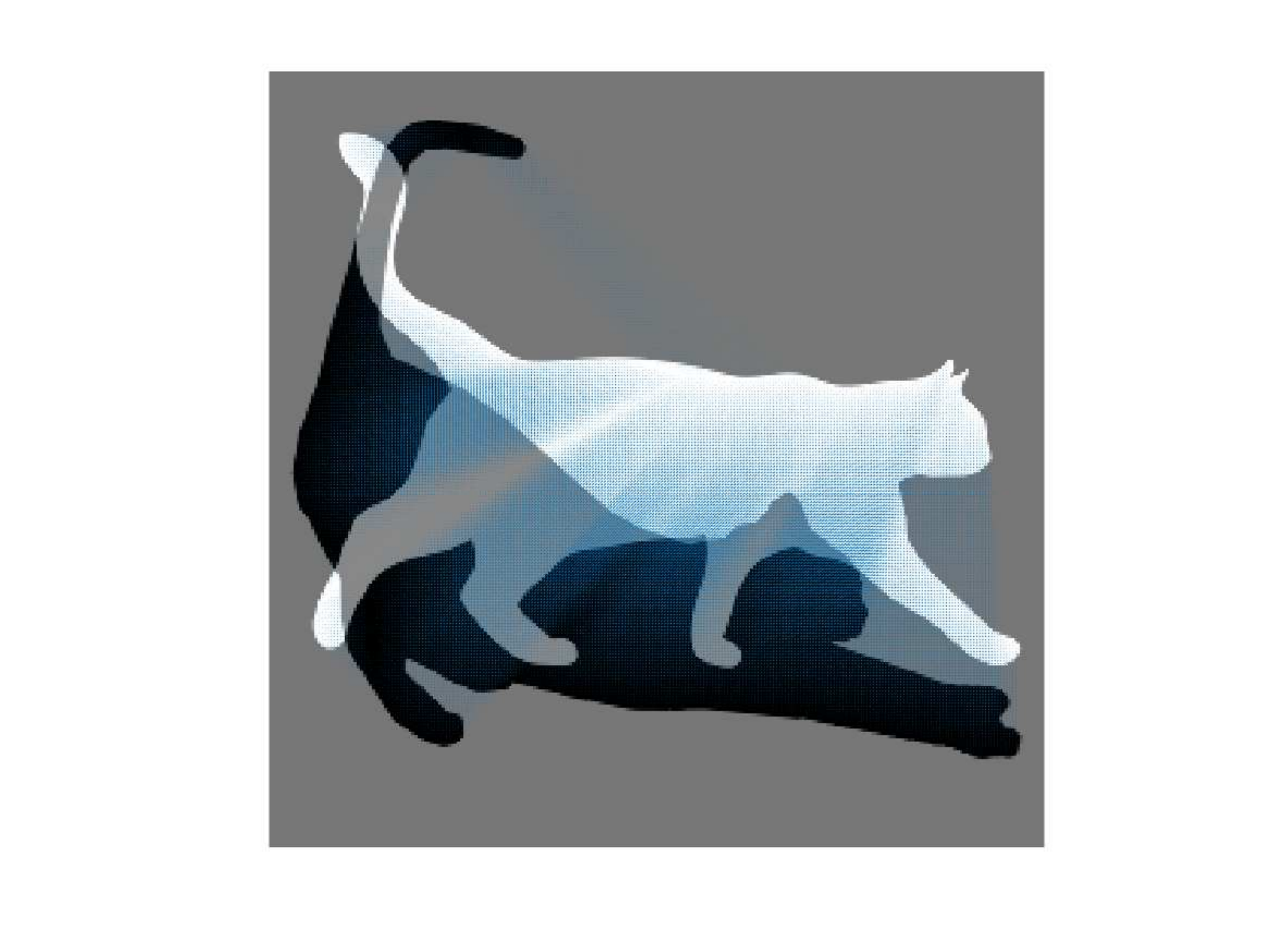}}
 \caption{ $\rho^0$, $\rho^1$ are the white standing cat and the black crouching cat, respectively. Both images are $256\times 256$, and the earth mover's distance between $\rho^0$ and $\rho^1$ is 0.6718.}
 \label{figure-cat}
 \end{center}
 \vskip -0.2in
 \end{figure}

\subsection{CT reconstruction}\label{subsec: CT}
We test solving the following optimization problem for CT image reconstruction:
\begin{equation}\label{equ: CT reconstruction}
\begin{array}{ll}
\mbox{minimize}& \Phi(u)=\frac{1}{2}\|Ru-b\|^2_{2}+\lambda\|Du\|_{1},
\end{array}
\end{equation}
where $R\in \ \R^{13032\times65536}$ is a system matrix for 2D fan-beam CT with a curved detector, $b=Ru_{\text{true}}\in \R^{13032}$ is a vector of line-integration values, and we want to reconstruct $u_{\text{true}}\in\R^{MN}$, where $M=N=256$. $D$ is the 2D discrete gradient operator with $h=1$, and $\lambda=1$ is a regularization parameter. By using the \textit{fancurvedtomo} function from the AIR Tools II \cite{hansen2018air} package, we generate a test problem where the projection angles are $\ang{0}, \ang{10}, \dots, \ang{350}$, and for all the other input parameters we use the default values.

Following \cite{sidky2012convex}, we formulate the problem \eqref{equ: CT reconstruction}  in the form of \eqref{equ: PDHG problem} by taking
\begin{align}
    g\begin{pmatrix} p\\q\end{pmatrix}= \frac{1}{2}\|p-b\|_2^2+\lambda\|q\|_1,\quad f(u)=0, \quad A=\begin{pmatrix} R\\D\end{pmatrix},
\end{align}
By using this formulation, we avoids inverting the matrices $R$ and $D$.

Since the block structure of $AA^T$ is rather complicated, if we naively choose $M_1=\frac{1}{\tau}I_n$ and $M_2=\tau AA^T$ like in the previous three experiments, it becomes hard to find a fast subproblem solver for the $z$-subproblem. 
In Table \ref{Table: CT}, we report a TFOCS implementation of FISTA for solving the $z$-subproblem and the overall convergence is very slow. 

Instead, we propose to choose
\begin{align}\label{equ: CT preconditioner choice}M_1=\frac{2}{\tau}I_{n},\quad  M_2=\begin{pmatrix}
    \tau\|R\|^2I_{m-2n} & 0\\
    0 & \tau DD^T
    \end{pmatrix}
\end{align}
or
\begin{align}
    \label{equ: CT preconditioner choice 1}
    M_1=\text{diag}(\Sigma_i |{R}_{i,j}|)+\frac{1}{\tau}I_{n}, \quad M_2 = \begin{pmatrix}
    \text{diag}(\Sigma_j|{R}_{i,j}|) & 0\\
    0 & \tau DD^T
    \end{pmatrix}.
\end{align}
These choices satisfy \eqref{equ: M}, and have simple block structures, a fixed epoch of $S$ as cyclic proximal BCD iterators gives fast overall convergence. Note that 
\eqref{equ: CT preconditioner choice 1} is a little slower but avoids the need of estimating $\|R\|$.

We summarize the numerical results in Table \ref{Table: CT}. All the algorithms are executed until $\delta^k\coloneqq\frac{|\Phi^k-\Phi^{\star}|}{|\Phi^{\star}|}< 10^{-4}$, where $\Phi^k$ is the objective value at the $k$th iteration and $\Phi^*$ is the optimal objective value obtained by calling CVX. The best results of $\tau\in\{10,1,0.1,0.01,0.001\}$ and $p\in\{1,2,3\}$ are summarized in Table \ref{Table: CT}, for iPrePDHG (S=FISTA) with $M_2=\tau AA^T$, the result for $p=100$ is also reported (here we use the TFOCS implementation of FISTA). 

\begin{table}[htbp]
\label{Table: CT}
\begin{center}
\begin{small}
\begin{tabular}{c|c|c|c}
Method & Parameters  & Outer Iter & Runtime(s)\\
\hline
\hline
\multirow{1}{*}{PDHG} &$\tau=0.001, M_1=\frac{1}{\tau}I_{n}, M_2 = \tau\|A\|^2I_{m}$  & 364366 & 3663.0348 \\
\hline
\multirow{2}{*}{DP-PDHG} & $M_1=\text{diag}(\Sigma_i |{A}_{i,j}|),$ &\multirow{2}{*}{70783} & \multirow{2}{*}{713.9865} \\
& $M_2=\text{diag}(\Sigma_j|{A}_{i,j}|)$ & &\\
\hline
\multirow{1}{*}{PrePDHG} &\multirow{2}{*}{$\tau=0.01, M_1 = \frac{1}{\tau}I_{n}, M_2=\tau A A^T$} & \multirow{2}{*}{-} & \multirow{2}{*}{$>10^4$} \\(ADMM) & & &\\
\hline
\multirow{1}{*}{iPrePDHG} &$\tau=0.001, M_1 = \frac{1}{\tau}I_{n}$,& \multirow{2}{*}{-}  & \multirow{2}{*}{$>10^4$}  \\(S=FISTA)
& $M_2=\tau A A^T, p=1, 2, \text{or}\,\, 3$ & &\\
\hline
\multirow{1}{*}{iPrePDHG} &$\tau=0.01, M_1 = \frac{1}{\tau}I_{n}$,& \multirow{2}{*}{-}  & \multirow{2}{*}{$>10^4$}  \\(S=FISTA)
& $M_2=\tau A A^T, p=100$ & &\\
\hline
\multirow{2}{*}{iPrePDHG} &$\tau=0.01,M_1=\frac{2}{\tau}I_{n},$  & \multirow{3}{*}{\textbf{587}} & \multirow{3}{*}{\textbf{7.5365}}\\

        (S=BCD)             & $M_2=\begin{pmatrix}
    \tau \|R\|^2 & 0\\
    0 & \tau DD^T
    \end{pmatrix}, p=2$ &  &     \\
\hline
\multirow{2}{*}{iPrePDHG} &$\tau=0.01,M_1=\text{diag}(\Sigma_i |{R}_{i,j}|)+\frac{1}{\tau}I_{n},$  & \multirow{3}{*}{\textbf{858}} & \multirow{3}{*}{\textbf{10.3517}}\\

                  (S=BCD)   & $M_2=\begin{pmatrix}
    \text{diag}(\Sigma_j|{R}_{i,j}|) & 0\\
    0 & \tau DD^T
    \end{pmatrix}, p=2$ &  &     \\
\hline
\end{tabular}\\
\end{small}
\end{center}
\caption{CT reconstruction}
\end{table}


\section{Conclusion}
\label{Sec: conclusion}
We have developed an approach to accelerate PDHG and ADMM in this paper. Our approach uses effective preconditioners to significantly reduce the number of iterations. In general, most effective preconditioners are non-diagonal and cause very difficult subproblems in PDHG and ADMM, so previous arts are restrictive with less effective diagonal preconditioners. However, we deal with those difficult subproblems by ``solving'' them highly inexactly, running just very few epochs of proximal BCD iterations. In all of our numerical tests, our algorithm needs relatively few outer iterations (due to effective preconditioners) and has the shortest total running time, achieving 7--95 times speedup over the next best algorithm.

Theoretically, we show a fixed number of inner iterations suffice for global convergence though a new relative error condition. The number depends on various factors but is easy to choose in all of our numerical results.


There are still open questions left for us to address in the future: (a) Depending on problem structures, there are  choices of preconditioners that are better than $M_1=\frac{1}{\tau}I_n, M_2=\tau AA^T$ (the ones that lead to ADMM if the subproblems are solved exactly). For example, in CT reconstruction, our choices of $M_1$ and $M_2$ have much faster overall convergence. (b) Is it possible to show Algorithm \ref{alg: Inexact PrePDHG} converges even with $S$ chosen as the iterator of faster accelerated solvers like
APCG \cite{lin2014accelerated}, NU\_ACDM \cite{allen2016even}, and A2BCD \cite{hannah2018texttt}?
(c) In general, how to accelerate a broader class of algorithms by integrating effective preconditioning and cheap inner loops while still ensuring global convergence? 
\appendix
\section{ADMM as a special case of PrePDHG}
\label{App: to ADMM}
\\

In this section we show that if we choose $M_1=\frac{1}{\tau}$ and $M_2=\tau AA^T$ in PrePDHG \eqref{equ: PrePDHG}, then it is equivalent to ADMM on the primal problem \eqref{equ: PDHG problem}.

By Theorem 1 of \cite{yan2016self}, we know that ADMM is primal-dual equivalent, in the sense that one can recover primal iterates from dual iterates and vice versa. Therefore, it suffices to show that $M_1=\frac{1}{\tau}$ and $M_2=\tau AA^T$ in PrePDHG \eqref{equ: PrePDHG} on the primal problem is equivalent to ADMM on the dual problem \eqref{equ: PDHG dual problem}.

In Theorem \ref{thm: Inexact PLADMM} we have shown that, under an appropriate change of variables, PrePDHG on the primal is equivalent to applying \eqref{equ: PLADMM} to the dual. As a result, we just need to demonstrate that the latter is exactly ADMM on the dual when
$M_1=\frac{1}{\tau}I_{n\times n}$ and $M_2=\tau AA^T$.

For the $z$-update in \eqref{equ: PLADMM}, we have
\begin{align}
    z^{k+1}
    &=\argmin_{z\in\Rm}\{g^*(z)-\tau\langle z-z^k, A(-A^Tz^k-y^k+u^k)\rangle+\frac{\tau}{2}\|z-z^k\|^2_{AA^T}\}\nonumber\\
    &=\argmin_{z\in\Rm}\{g^*(z)-\tau\langle z-z^k, A(-y^k+u^k)\rangle+\frac{\tau}{2}\|z\|^2_{AA^T}\}\nonumber\\
    &=\argmin_{z\in\Rm}\{g^*(z)+\tau\langle z, A(y^k-u^k)\rangle+\frac{\tau}{2}\|A^Tz\|^2\}\nonumber\\
    &=\argmin_{z\in\Rm}\{g^*(z)+\tau\langle A^Tz, -u^k\rangle+\frac{\tau}{2}\|A^Tz+y^k\|^2\}\nonumber\\
    &=\argmin_{z\in\Rm}\{g^*(z)+\tau\langle -A^Tz-y^k, u^k\rangle+\frac{\tau}{2}\|A^Tz+y^k\|^2\}.\label{equ: ADMM 1}
\end{align}
and for the $y$-update we have
\begin{align}
    y^{k+1}&=\prox^{M_1^{-1}}_{f^*}(u^{k}-A^Tz^{k+1})\nonumber\\
    &=\argmin_{y\in\Rn}\{f^*(y)+\frac{\tau}{2}\|y-u^k+A^Tz^{k+1}\|^2\}\nonumber\\
    &=\argmin_{y\in\Rn}\{f^*(y)+\tau\langle -A^Tz^{k+1}-y, u^k\rangle+\frac{\tau}{2}\|A^Tz^{k+1}+y\|^2\}.\label{equ: ADMM 2}
\end{align}
Define $v^k=\tau u^k$, \eqref{equ: ADMM 1}, \eqref{equ: ADMM 2}, and the $u-$update in \eqref{equ: PLADMM} become
\begin{align*}
    z^{k+1}&=\argmin_{z\in\Rm}\{g^*(z)+\langle -A^Tz-y^k, v^k\rangle+\frac{\tau}{2}\|A^Tz+y^k\|^2\},\\
    y^{k+1}&=\argmin_{y\in\Rn}\{f^*(y)+\langle -A^Tz^{k+1}-y, v^k\rangle+\frac{\tau}{2}\|A^Tz^{k+1}+y\|^2\},\\
    v^{k+1}&=v^{k}-\tau(A^Tz^{k+1}+y^{k+1}),
\end{align*}
which are ADMM iterations on the dual problem \eqref{equ: PDHG dual problem}.

\section{Proof of Theorem \ref{thm: BCD finite inner loops}: bounded relative error when $S$ is the iterator of cyclic proximal BCD}
\label{App: BCD proof}

The $z$-subproblem in \eqref{equ: PrePDHG} has the form
\[
\min_{z\in\Rm} h_1(z)+h_2(z),
\]
where
    $h_1(z) =g^*(z)=\sum_{i=1}^{l}g^*_i(z_i),$
    and $h_2(z)=\frac{1}{2}\|z-z^k-M_2^{-1}A(2x^{k+1}-x^k)\|^2_{M_2}.$
And $z^{k+1}=z^{k+1}_p$ is given by
\begin{align*}
    z^{k+1}_0&=z^{k},\\
    z^{k+1}_{i+1}&=S(z^{k+1}_i, x^{k+1}, x^k), \quad i=0,1,...,p-1,
\end{align*}
Here, $S$ is the iterator of cyclic proximal BCD. Define
\begin{align*}
T(z)&=\prox_{\gamma g^*(z)}(z-\gamma \nabla h_2(z))),\\
B(z) &= \frac{1}{\gamma}(z-T(z)),
\end{align*}
and the $i$th coordinate operator of $B$:
\[
B_i(z)=(0,...,(B(z))_i,...,0).
\]
Then, we have
\[
z^{k+1}_{i+1}=S(z^{k+1}_i, x^{k+1}, x^k)=(I-\gamma B_l)(I-\gamma B_2)...(I-\gamma B_1)z^{k+1}_i.
\]
By \cite[Prop. 26.16(ii)]{bauschke2017convex}, we know that $T(z)$ is a contraction with coefficient  $\theta=\sqrt{1-\gamma(2\lambda_{\mathrm{min}}(M_2)-\gamma \lambda_{\mathrm{max}}^2(M_2))}$. 
We know that for $\forall z_1, z_2\in\Rm$ and $\mu=\frac{1-\theta}{\gamma}$,
\begin{align*}
    \langle B(z_1)-B(z_2), z_1-z_2\rangle&=\frac{1}{\gamma}\|z_1-z_2\|^2-\frac{1}{\gamma}\langle T(z_1)-T(z_2), z_1-z_2\rangle\\
    &\geq \mu\|z_1-z_2\|^2,
\end{align*}

Let $z^{k+1}_{\star}=\argmin_{z\in \Rm}\{h_1(z)+h_2(z)\}$. For \cite[Thm 3.5]{chow2017cyclic}, we have
\begin{align}
\label{equ: BCD contraction}
\|z^{k+1}_i-z^{k+1}_{\star}\|\leq \rho^i \|z^{k+1}_0-z^{k+1}_{\star}\|, \quad \forall i=1,2,...,p.
\end{align}
where $\rho=1-\frac{\gamma \mu^2}{2}$.

Let $y_j=(I-\gamma B_j)...(I-\gamma B_1)z^{k+1}_{p-1}$ for $j=1,...,l$ and $y_0=z^{k+1}_{p-1}$. Note that $(z^{k+1}_{p})_j=(y_j)_j$ for $j=1,2,...,l$, and the blocks of $y_j$ satisfies
\begin{align*}
    (y_{j})_t=
    \begin{cases}
    \Big(\prox_{\gamma g^*}\big(y_{j-1}-\gamma \nabla h_2(y_{j-1})\big)\Big)_t, \quad \text{if}\,\,t=j \\
    (y_{j-1})_t, \quad\quad\quad\quad\quad\quad\quad\quad\quad\quad\quad\,\,\quad\, \text{otherwise}.
    \end{cases}
\end{align*}
On the other hand, we have
\[
\prox_{\gamma g^*}\big(y_{j-1}-\gamma \nabla h_2(y_{j-1})\big)=\argmin_{y\in\Rm}\{g^*(y)+\frac{1}{2\gamma}\|y-y_{j-1}+\gamma \nabla h_2(y_{j-1})\|^2\}.
\]
Since $g^*$ and $\|\cdot\|^2$ are separable, we obtain
\[
\vz \in \partial g^*_j((y_j)_j)+\frac{1}{\gamma}\Big((y_j)_j-(y_{j-1})_j+\gamma \big(\nabla h_2(y_{j-1})\big)_j\Big), \quad \forall j=1,2,...,l,
\]
or equivalently,
\[
\vz \in \partial g^*_j((z^{k+1}_p)_j)+\frac{1}{\gamma}\Big((z^{k+1}_p)_j-(z^{k+1}_{p-1})_j+\gamma \big(\nabla h_2(y_{j-1})\big)_j\Big), \quad \forall j=1,2,...,l.
\]
Therefore,
\[
\vz \in \partial g^*(z^{k+1}_p)+\frac{1}{\gamma}\Big(z^{k+1}_p-z^{k+1}_{p-1}+\gamma \xi_p\Big), \quad \forall j=1,2,...,l,
\]
where $(\xi_p)_j=\big(\nabla h_2(y_{j-1})\big)_j$ for $j=1,2,...,l$. Comparing this with \eqref{equ: varepsilon^{k+1}}, we obtain
\[
\varepsilon^{k+1}=\xi_p-\nabla h_2(z^{k+1}_p)+\frac{1}{\gamma}(z^{k+1}_p-z^{k+1}_{p-1}).
\]
Notice that the first $j-1$ blocks of $y_{j-1}$ are the same with those of $y_l=z^{k+1}_{p}$, and the rest of the blocks are the same with those of $y_0=z^{k+1}_{p-1}$, so we have
\begin{align}
    \|\varepsilon^{k+1}\|&\leq \sum_{j=1}^l \lambda_{\mathrm{max}}(M_2)\|y_{j-1}-z^{k+1}_p\|+\frac{1}{\gamma}\|z^{k+1}_p-z^{k+1}_{p-1}\|\nonumber\\
    &\leq l\lambda_{\mathrm{max}}(M_2)\|z^{k+1}_{p-1}-z^{k+1}_p\|+\frac{1}{\gamma}\|z^{k+1}_p-z^{k+1}_{p-1}\|\nonumber\\
    &\leq (l\lambda_{\mathrm{max}}(M_2)+\frac{1}{\gamma})(\|z^{k+1}_p-z^{k+1}_{\star}\|+\|z^{k+1}_{p-1}-z^{k+1}_{\star}\|)\nonumber
\end{align}
Combine this with \eqref{equ: BCD contraction}
\begin{align}
\label{equ: inequ 1}
    \|\varepsilon^{k+1}\|\leq (l\lambda_{\mathrm{max}}(M_2)+\frac{1}{\gamma})(\rho^p+\rho^{p-1})\|z^{k+1}_0-z^{k+1}_{\star}\|.
\end{align}
Combining
\begin{align*}
\|z^{k+1}-z^k\|&=\|z^{k+1}_p-z^{k+1}_0\|\\
&\geq \|z^{k+1}_{0}-z^{k+1}_{\star}\|-\|z^{k+1}_{p}-z^{k+1}_{\star}\|\\
&\geq (1-\rho^p)\|z^{k+1}_0-z^{k+1}_{\star}\|
\end{align*}
with \eqref{equ: inequ 1}, we obtain
\[
\|\varepsilon^{k+1}\|\leq \frac{(l\lambda_{\mathrm{max}}(M_2)+\frac{1}{\gamma})(\rho^p+\rho^{p-1})}{1-\rho^p}\|z^{k+1}-z^k\|.
\]

\section{Proof of Theorem \ref{thm: sequence convergence}: KŁ property gives global convergence}
\label{App: KL}

According to Theorem \ref{thm: Inexact PLADMM}, we just need to show that $\{M_1^{-1}u^k, z^k\}$ converges to a primal-dual solution pair of \eqref{equ: PDHG problem}.

By Theorem \ref{thm: subsequence convergence}, we can take $\{z^{k_s}, y^{k_s}, u^{k_s}\}\rightarrow (z^c, y^c, u^c)$ as $s\to\infty$. Note that
$L(z^{k_s}, y^{k_s}, u^{k_s})$ is monotonic nonincreasing and lower bounded due to Theorem \ref{thm: Lyapunov}, which implies the convergence of $L(z^{k_s}, y^{k_s}, u^{k_s})$. Since $L$ is lower semicontinuous, we have
\begin{align}
\label{equ: first inequ}
L(z^c, y^c, u^c)\leq \lim_{s\rightarrow \infty} L(z^{k_s}, y^{k_s}, u^{k_s}).
\end{align}
Since the only potentially discontinuous terms in $L$ is $g^*$, we have
\begin{align}
\label{equ: second inequ}
\lim_{s\rightarrow \infty} L(z^{k_s}, y^{k_s}, u^{k_s})-L(z^c, y^c, u^c)\leq \limsup_{s\rightarrow \infty} g^*(z^{k_s})-g^*(z^c).
\end{align}
By \eqref{equ: first optimality condition}, we know that \begin{align*}
    g^*(z^c)&\geq g^*(z^{k_s})\\
    &+\langle M_2(z^{k_s-1}-z^{k_s})+AM_1^{-1}(-A^Tz^{k_s-1}-y^{k_s-1}+u^{k_s-1})-\varepsilon^{k_s}, z^c-z^{k_s}\rangle,
\end{align*}
Then, yy Theorem \ref{thm: Lyapunov}, we further get $z^{k_s-1}-z^{k_s}\rightarrow \vz$. Since $z^{k_s}\rightarrow z^c$ and $\{z^{k}, y^{k}, u^{k}\}$ is bounded, we obtain
\[
\limsup_{s\rightarrow \infty} g^*(z^{k_s})-g^*(z^c)\leq 0.
\]
Combining this with \eqref{equ: first inequ} and \eqref{equ: second inequ}, we conclude that $\lim_{s\rightarrow \infty} L(z^{k_s}, y^{k_s}, u^{k_s})=L(z^c, y^c, u^c)$.

Since $g^*$ is a KŁ function, $L$ is also KŁ. Consequently, similar to Theorem 2.9 of \cite{attouch2013convergence}, we can claim the convergence of $\{z^k, y^k, u^k\}$ to $\{z^c, y^c, u^c\}$.

\section{Two-block ordering in Claim \ref{claim1} and four-block ordering in Claim \ref{claim2}}\label{App: Smart Ordering}

According to \eqref{equ: PrePDHG 1}, when $M_2=\tau AA^T$, the $z$-subproblem of Algorithm \ref{alg: Inexact PrePDHG} is
\begin{align}\label{equ: ordering 1}
{z}^{k+1}&=\argmin_{z\in\Rm}\{g^*(z)-\langle z-z^k, A(2x^{k+1}-x^k)\rangle+\frac{\tau}{2}\|A^T(z-z^k)\|^2_2\}.
\end{align}

Let us prove Claim \ref{claim1} first. In that claim, $A=\text{div}\in\R^{MN\times 2MN}$ and $z\in {\mathbb{R}}^{MN}$.
Following the definition of the sets $z_b$ and $z_r$,
we separate the $MN$ columns of $A^T=-D$ into two blocks $L_b$, $L_r$ by associating them with $z_b$ and $z_r$, respectively. Therefore, we have $A^Tz = L_bz_b+L_rz_r$ for any $z\in {\mathbb{R}}^{MN}$.

By the red-black ordering in Fig. \ref{2block}, different columns of $L_b$ are orthogonal one another, so ${L_b}^TL_b$ is diagonal. Similarly, ${L_r}^TL_r$ is also diagonal.

Let $b$ be the set of black nodes and $r$ the set of red nodes. We can rewrite \eqref{equ: ordering 1} as
\begin{align}
\label{equ: ordering 2}
{z}^{k+1}=\argmin_{z_b, z_r\in{\R^{MN/2}}}\{g_b^*(z_b)+g_r^*(z_r)+\langle z_b+z_r,c^k\rangle\\+\frac{\tau}{2}\|L_b(z_b-z_b^k)+L_r(z_r-z_r^k)\|^2_2\},\nonumber
\end{align}
where $g^*_b(z_b)=\sum_{(i,j)\in b} g^*_{i,j}(z_{i,j})$, $g^*_r(z_r)=\sum_{(i,j)\in r} g^*_{i,j}(z_{i,j})$, and $c^k=-A(2x^{k+1}-x^k)$.

Applying cyclic proximal BCD to black and red blocks alternatively yields
\begin{align}
    z_b^{k+\frac{t+1}{p}}&={\text{prox}}_{g_b^*(\cdot)+\langle\cdot,\tau {L_b}^TL_r(z_r^{k+\frac{t}{p}}-z_r^k)+c_b^k\rangle}^{\tau {L_b}^TL_b}(z_b^{k+\frac{t}{p}}),\\
    z_r^{k+\frac{t+1}{p}}&={\text{prox}}_{g_r^*(\cdot)+\langle\cdot,\tau {L_r}^TL_b(z_b^{k+\frac{t+1}{p}}-z_b^k)+c_r^k\rangle}^{\tau {L_r}^TL_r}(z_r^{k+\frac{t}{p}}).
\end{align} for $t=0,1,...,p-1$, where $p$ is the number of inner iterations in Algorithm \ref{alg: Inexact PrePDHG}.

These updates have closed-form solutions since $L_b^TL_b$ and $L_r^TL_r$ are diagonal, and all $prox_{\lambda g^*_{i,j}}$ are closed-form. Furthermore, the updates within each block can be done in parallel.

The proof of Claim \ref{claim2} is similar. When $A=D$ or $A=D_w$, we separate the columns of $A^T$ into four blocks $L_b$, $L_r$, $L_y$, $L_g$ by associating them with $z_b$, $z_r$, $z_y$ ,$z_g$, respectively. Therefore, we have $A^Tz = L_bz_b+L_rz_r+L_y z_y+L_g z_g$ for all $z\in {\mathbb{R}}^{2MN}$. Similarly, by the block design in Fig. \ref{4block}, cyclic proximal BCD iterations have closed-form solutions, and updates within each block can be executed in parallel.

\bibliographystyle{siamplain}
\bibliography{references}
\end{document}